\newcommand*{\Nbb}{\mathbb{N}}
\newcommand*{\Rbb}{\mathbb{R}}
\newcommand*{\Sbb}{\mathbb{S}}
\newcommand*{\Zbb}{\mathbb{Z}}
\newcommand*{\gb}{\mathbf{g}}%
\newcommand*{\hb}{\mathbf{h}}%
\newcommand*{\Gmb}{{\boldsymbol\Gamma}}%
\newcommand*{\veps}{\varepsilon}%
\newcommand*{\etab}{{\boldsymbol\eta}}%
\newcommand*{\vphi}{\varphi}%
\newcommand*{\kpb}{{\boldsymbol\kappa}}%
\newcommand*{\la}{\lambda}%
\newcommand*{\mub}{{\boldsymbol\mu}}%
\newcommand*{\pib}{{\boldsymbol\pi}}%
\newcommand*{\sigb}{{\boldsymbol\sigma}}%
\newcommand*{\vrho}{\varrho}%
\newcommand*{\om}{\omega}%
\newcommand*{\omb}{{\boldsymbol\omega}}%
\newcommand*{\Dc}{\mathcal{D}}%
\newcommand*{\Gc}{\mathcal{G}}%
\newcommand*{\Ic}{\mathcal{I}}%
\newcommand*{\Lc}{\mathcal{L}}%
\newcommand*{\Nc}{\mathcal{N}}%
\newcommand*{\Rc}{\mathcal{R}}%
\newcommand*{\oh}{\frac12}%
\newcommand*{\thh}{\frac32}%
\newcommand*{\di}{\,\mathrm{d}}%
\newcommand*{\emp}[1]{\emph{#1}}%
\newcommand*{\abs}[1]{\left\lvert #1 \right\rvert}%
\newcommand*{\omin}{\om^{\mathrm{min}}}%
\newcommand*{\omax}{\om^{\mathrm{max}}}%
\numberwithin{equation}{section}%
\newtheorem{assumption}{Assumption}%
\theoremstyle{plain}%
\newtheorem{theorem}{Theorem}%
\newtheorem{proposition}{Proposition}%
\theoremstyle{definition}%
\newtheorem{definition}{Definition}%
\newtheorem{remark}{Remark}%
\newcommand{\qqed}{\hfill$\blacksquare$}%
\renewenvironment*{proof}[1][Proof]{%
\begin{trivlist}%
\item[\hskip \labelsep {\bfseries #1}]}{\qqed\end{trivlist}}%
\newcommand{\Exp}{\operatorname{\mathbf{E}}}%
\newcommand{\Prob}{\operatorname{\mathbf{P}}}%
\newcommand{\Var}{\operatorname{\mathbf{Var}}}%
\newcommand{\ind}{\operatorname{\mathds{1}}}%
\newcommand*{\bristol}{School of Mathematics, University of Bristol, University Walk, Bristol, BS8 1TW, United Kingdom.}%
\newcommand*{\bute}{Institute of Mathematics, Budapest University of Technology and Economics, Egry J. u. 1., Budapest, H-1111, Hungary.}
\newcommand*{\titl}{
How to initialize a second class particle?
}%
\newcommand*{\auths}{
M\'arton Bal\'azs\,\footnote{\bristol}\\[0.1em]
{\normalsize \url{m.balazs@bristol.ac.uk}}
\and
Attila L\'aszl\'o Nagy\,\footnote{\bute}\\[0.1em]
{\normalsize \url{attilalaszlo.nagy@gmail.com}}
}%
\newcommand*{\dat}{\today}%
\title{\titl}%
\author{\auths}%
\date{\dat}%
\begin{document}
\maketitle%

\begin{abstract}
 We identify the ballistically and diffusively rescaled limit distribution of the second class particle position in a wide range of asymmetric and symmetric interacting particle systems with established hydrodynamic behavior, respectively (including zero-range, misanthrope and many other models). The initial condition is a step profile which, in some classical cases of asymmetric models, gives rise to a rarefaction fan scenario. We also point out a model with non-concave, non-convex hydrodynamics, where the rescaled second class particle distribution has both continuous and discrete counterparts. The results follow from a substantial generalization of P.\ A.\ Ferrari and C.\ Kipnis' arguments (``Second class particles in the rarefaction fan'', Ann. Inst. H. Poincar\'e, 31, 1995) for the totally asymmetric simple exclusion process. The main novelty is the introduction of a \emp{signed} coupling measure as initial data, which nevertheless results in a proper probability initial distribution for the site of the second class particle and makes the extension possible. We also reveal in full generality a very interesting invariance property of the one-site marginal distribution of the process underneath the second class particle which in particular proves the intrinsicality of our choice for the initial distribution. Finally, we give a lower estimate on the probability of survival of a second class particle-antiparticle pair.
\end{abstract}

\bigskip
\noindent \textbf{Keywords.} second class particle, limit distribution, rarefaction fan, shock, hydrodynamic limit, collision probability.

\bigskip
\noindent \textbf{Acknowledgement.} The authors thank valuable discussions with Pablo A.\ Ferrari on the problem, with Ellen Saada on the hydrodynamic limit of asymmetric processes and with C\'edric Bernardin on symmetric processes. We also thank the anonymous referee for his/her comments. M.\ Bal\'azs acknowledges support from the Hungarian National Research, Development and Innovation Office, NKFIH grants K100473 and K109684.



\section{Introduction}%

This paper studies the behavior of second class particles in a wide class of one-dimensional attractive particle systems. The evolution of such particles can be obtained by coupling two systems (of first class particles) coordinate-wise in such a manner that their initial configurations only differ at finitely many places. Second class particles interact with the underlying process and perform highly nontrivial motion which is only partially understood in general. In asymmetric models they are known, in first order, to follow the characteristic lines of the limiting hydrodynamic equation of the density. In three classical cases: translation-invariant stationary, rarefaction fan, and shock scenario this results in a law of large numbers with the characteristic velocity, a random admissible characteristic velocity of the rarefaction fan, and the speed of the shock, respectively. These make the second class particle a relevant microscopic object that captures macroscopic properties of the ambient system. Fluctuations show superdiffusive scaling for translation-invariant stationary, rarefaction fan, deterministic shock initial data and diffusive scaling for random shock initial data. Many of the previous properties have been proven rigorously for the most-studied totally asymmetric simple exclusion process (TASEP) and in some cases for other processes as well. However, they are conjectured to hold in a wide range of particle systems. Second class particles in symmetric systems have not been much explored, in some simple cases diffusive behavior is known.

We build on the seminal paper \cite{serf} by P.\ A.\ Ferrari and C.\ Kipnis which made use of a translation argument to investigate the second class particle of the TASEP starting from the rarefaction fan. Their argument compares a step initial product Bernoulli distribution with its translated version and notices that the joint realization of these two can be understood as a coupled initial distribution with possibly a second class particle at the origin. This program crucially relied on the fact that the second class particle of simple exclusion is a uniquely defined object as it can \emp{only} conceive by coupling a process of \emp{zero} particles with one of \emp{one} particle at the site of the second class particle. When dealing with systems of more choices for one site occupation numbers, the second class particle stops being a uniquely determined object. Stochastic domination of the natural measures associated with attractive models still holds but the actual realization of a coupled pair has some details to fix besides its marginals. In particular, it is not clear whether two models with slightly different densities can be coupled using zero or one second class particles per site only, or more than one of them on a site have to be assumed with positive probability. Actually, this latter is the case for popular stationary distributions as the ones of Geometric or Poisson marginals (e.g., for zero-range processes).

First, we build up a natural initial distribution ($\hat\mub^{\vrho,\,\la}$) for the second class particle in step initial configurations (with different densities $\vrho\neq\la$ on the left and right) which allows for an extension of P.\ A.\ Ferrari and C.\ Kipnis' arguments. Our construction works even when coupling with zero or one second class particles only fails. This is where the main novelty of the paper lies: to force zero or one second class particles with the correct one-site marginals for the coupled pair, one has to introduce negative weights in the coupling measure. As it turns out negative weights only belong to configurations without a second class particle, and this non-physical coupling measure always assigns positive weights to states with a second class particle. By normalizing on these states only, the proper probability distribution $\hat\mub^{\vrho,\,\la}$ a.s.\ has then the second class particle, which will also turn out to be canonical in many sense.

Under the initial distribution $\hat\mub^{\vrho,\,\la}$ we connect the displacement of the second class particle to easier quantities of the ambient system. Using recent results of hydrodynamics we can then proceed to prove limit distribution results on the rescaled position of the second class particle. Both asymmetric and symmetric systems are handled under the natural scaling that fits the respective scenario. The limit distributions then relate to the solution of the hydrodynamic equation with step initial condition. There are two particular and interesting instances, to the best of our knowledge not much explored in the literature, of second class particle-behavior:
\begin{enumerate}[(i)]
\item in asymmetric models with non-concave and non-convex hydrodynamic flux, shocks and rarefaction fans can coexist and the limit distribution of the second class particle reflects this fact by developing both continuous and discrete components at the same time; and
\item central limit theorem for the second class particle is pointed out in a symmetric system where, as opposed to simple symmetric exclusion, it is \emp{not} a simple random walker.
\end{enumerate}

As a by-product of our arguments we are able to relate the one-site marginal of the first class particles at the site of the second class particle to the distribution of a model without the second class particle. Under certain initial distributions this results in a time-stationary one-site marginal -- a quite unexpected result. Finally, we push the arguments, in line with \cite{serf}, to give a lower estimate on the survival probability of a second class particle-antiparticle pair in general models.

\bigskip
\noindent\emp{Earlier results.}
A review and several open problems appeared in \cite{ferrarishocks,guiolmountfordquestions} many of which are completely solved by the present paper. A law of large numbers for the position of the second class particle of exclusion and zero range processes with shock initial condition was obtained by F.\ Rezakhanlou \cite{rez}. Note that his initial setup of the second class particle slightly differs from ours. As described above, in case of the rarefaction fan (and for the TASEP) the first and fundamental paper was \cite{serf}. T.\ Mountford and H.\ Guiol \cite{mountguiol} then sharpened \cite{serf} by proving that the convergence takes place almost surely. Recently P.\ Gon\c{c}alves has translated the results of \cite{serf} for the totally asymmetric constant rate zero-range process in \cite{gon_zr_raref} via a direct coupling between exclusion and zero-range. P.\ A.\ Ferrari, P.\ Gon\c{c}alves and J.\ B.\ Martin \cite{fgmcollision} have very elegant arguments on collision probabilities in exclusion processes. Many results on the behavior of the second class particle in the TASEP have been reproven by P.\ A.\ Ferrari and L.\ P.\ R.\ Pimentel \cite{compint} and by P.\ A.\ Ferrari, J.\ B.\ Martin and L.\ P.\ R.\ Pimentel \cite{ferrmarpiment}, translating the problem into one of competition interfaces in last passage percolation.
E.\ Cator and S.\ Dobrynin \cite{catdob} have studied Hammersley's process in continuous space in which limit theorems were obtained for the second class particle starting from the rarefaction fan. D.\ Romik and P.\ \'{S}niady \cite{romik} pointed out an elegant algebraic connection between the motion of second class particles in a variant of the TASEP and an evolution, so-called ``jeu de taquin'', defined on infinite Young tableaux through which the distributional limit was proved. TASEP equipped with higher order particles (like third, fourth, etc.\ class particles), known as the \emp{multi-type} TASEP, was investigated in \cite{amirangel} by G.\ Amir, O.\ Angel and B.\ Valk\'o, where the joint distribution of the speeds of higher order particles were also identified. This in particular includes collision probabilities and the formation of convoys. Analytic formul\ae\ were obtained by C.\ A.\ Tracy and H.\ Widom \cite{tracywidom} for the second class particle starting from the rarefaction fan of the ASEP.

\bigskip
\noindent\emp{Organization of the paper.} We start with discussing initial distributions in Subsection \ref{sec:initmeas} which form a crucially important part of our arguments. We then proceed with describing the dynamics in Section \ref{sec:dyn} with additional requirements in Section \ref{sec:misa}. The second class particle, our main object, is introduced in Section \ref{sec:scp}. We early on state the main results of this paper in Section \ref{sec:mainr} for which the precise hydrodynamic statements we need are postponed to Section \ref{sec:hydro} due to organizational purposes. Remarks on the initial distribution (Section \ref{sec:thed}), the fundamental identity behind our results (Section \ref{sec:scpdistr}), and a theorem on the background distribution of the site of the second class particle (Section \ref{sec:bg}) are also slightly postponed. We outline and discuss several examples of models in Section \ref{sec:partmodels}. Proofs follow in Section \ref{sec:proof}.

\section{Models}%

\subsection{State space and initial distribution}\label{sec:initmeas}
The model class we investigate originates in the work of Cocozza-Thivent \cite{coco}, extensions and several examples we cover first appeared in the papers \cite{fluct,hydro}. We consider general, nearest neighbor stochastic interacting particle systems $\omb:\,=(\omb(t))_{t\geq0}=\big((\om_i(t))_{i\in\Zbb}\big)_{t\geq0}$ on the configuration space $\Omega:=\Ic^{\Zbb}$ with $\Ic=\{\omin,\,\omin+1,\,\ldots,\,\omax-1,\,\omax\}\subset\Zbb$ such that $-\infty\leq\omin<\omax\leq+\infty$. In particular $\Ic$ can as well be an infinite subset of $\Zbb$. The quantity $\om_i(t)$ denotes the number of (signed) first class particles sitting at the $i^{\mathrm{th}}$ lattice point at time $t\in\Rbb^+_0$. We adopt this interpretation even if $\om_i(t)$ happens to be negative.

Our main object of investigation is the \emp{second class particle} which comes up from couplings of systems of first class particles. In particular it lives in the space $\Omega\times\Omega$, so before describing the dynamics of the above systems the appropriate choice of the initial measure on $\Omega\times\Omega$ will be discussed. This measure to be defined later turns out to be canonical and is indeed one of the crucial points of this paper.

We start with a general assumption on one-site marginals which will be the basis of building product initial distributions of configurations in $\Omega$ and of coupled pairs of configurations in $\Omega\times\Omega$.
\begin{assumption}\label{assump:marginalmeasure}
Let $\nu:\,=\left(\nu^{\vrho}\right)_{\vrho\in \Dc}$ be a family of probability measures on $\Ic$,
where $\Dc$ is a bounded subset of $[\omin,\omax]$ that satisfies the following properties:
\begin{itemize}
\item it is parametrized by its mean, that is $\vrho = \sum_{y\in\Ic}\,y\cdot\nu^{\vrho}(\{y\})$ holds for every $\vrho\in\Dc$; and
\item for each $\vrho>\la$, where $\vrho,\la\in \Dc$, the measure $\nu^{\vrho}$ stochastically dominates $\nu^{\la}$, that is $\nu^{\la}(\{z:z\leq y\}) \geq \nu^{\vrho}(\{z:z\leq y\})$ holds for every $y\in\Ic$.
\end{itemize}
\end{assumption}
This assumption is very mild, for e.g.\ any deterministic marginals of the form $\nu^{\vrho}(x) = \ind\{x=\vrho\}$ satisfy it with a $\vrho\in\Dc:\,=[\omin,\omax]\cap\Zbb$. We will present a more general set of measures in Section \ref{sec:misa} that also satisfies the above assumption.

In the sequel, we will refer to $\Dc$ as the \emp{set of densities}. With $\vrho,\ \la\in\Dc$, we define the product distribution
\begin{equation}\label{eq:initprodmeas}
\sigb^{\vrho,\,\la} :\,= \bigotimes_{i=-\infty}^0\nu^{\vrho}\otimes\bigotimes_{i=1}^{+\infty}\nu^{\la}
\end{equation}
on $\Omega$. Whenever $\vrho\neq\la$ this will be called the microscopic \emp{Riemannian density profile} or simply the step initial condition.

Next, we turn to special distributions on $\Omega\times\Omega$. Fix two densities $\vrho>\la$ of $\Dc$ and we define the measure $\hat{\nu}^{\vrho,\,\la}$ on $\Ic\times\Ic$ as
\begin{equation}\label{eq:scpcouplatorig}
\hat{\nu}^{\vrho,\,\la}(x,y)=
\frac{1}{\vrho-\la}\big(\nu^{\la}(\{z:z\leq y\})-\nu^{\vrho}(\{z:z\leq y\})\big)\cdot\ind\{x=y+1\},
\end{equation}
where $x,y\in\Ic$. It is an easy exercise to check that this indeed defines a probability distribution. We will comment on its origin later in Section \ref{sec:thed}.
Notice that $\om_0=\eta_0+1$ holds $\hat{\nu}^{\vrho,\,\la}$-a.s.\ for its two marginals. By a slight abuse of notation we also set
\begin{equation}\label{eq:diagonalmeasures}
\nu^{\vrho,\,\vrho}(x,\,y):\,=\nu^\vrho(x)\cdot\ind\{x=y\},
\quad\text{ and }\quad
\nu^{\la,\,\la}(x,\,y):\,=\nu^\la(x)\cdot\ind\{x=y\}
\end{equation}
as diagonal measures on $\Ic\times\Ic$. We can now define the initial probability distribution as a site-wise product coupling measure on the space $\Omega\times\Omega$:
\begin{equation}\label{eq:scpstart}
\hat\mub^{\vrho,\,\la}:\,=
\bigotimes_{i=-\infty}^{-1}\nu^{\vrho,\,\vrho}\otimes\hat\nu^{\vrho,\,\la}\otimes\bigotimes_{i=1}^\infty\nu^{\la,\,\la}.
\end{equation}
Later, we will start a coupled pair of systems of first class particles under the initial distribution $\hat{\mub}^{\vrho,\,\la}$, and we denote the associated probability and expectation by $\hat\Prob$ and $\hat\Exp$, respectively. Though the precise notion of the second class particle will be defined in Subsection \ref{sec:scp}, here we notice in advance that $\hat\Prob$ a.s.\ has a second class particle that initially starts from the origin.

\subsection{Dynamics of the models}\label{sec:dyn}
A continuous time Markov jump dynamics is attached on top of the configuration space $\Omega$ that allows the particles to execute right as well as left jumps with respective instantaneous rates $p$ and $q$. Formally, with the \emp{Kronecker symbol} $(\delta_i)_j=\ind\{i=j\}$ ($\ind\{\,\cdot\,\}$ stands for the indicator function throughout the article), the transitions are of the form
\begin{equation}
\omb\;\xrightarrow{\displaystyle p(\om_i,\om_{i+1})}\;\omb-\delta_{i}+\delta_{i+1}\in\Omega;\qquad
\omb\;\xrightarrow{\displaystyle q(\om_i,\om_{i+1})}\;\omb+\delta_{i}-\delta_{i+1}\in\Omega,
\label{eq:dynwpq}
\end{equation}
where $p,q:\Ic\times\Ic\to\Rbb^+_0$ are given \emp{deterministic} functions. Conditioned on a given configuration, the above steps take place independently for each $i\in\Zbb$ with the above respective rates. Throughout the article we assume \emp{non-degeneracy} for the rates, that is for every $i\in\Zbb$: $p(\om_i,\om_{i+1})>0$ ($q(\om_i,\om_{i+1})>0$) \emp{if and only if} $\omin<\om_i$ and $\om_{i+1}<\omax$ ($\omin<\om_{i+1}$ and $\om_i<\omax$). This also makes sure that the process a.s.\ keeps the state space $\Omega$. Sometimes we will let one of the left or right jump rates be zero (totally asymmetric case).

Now, the (formal) \emp{infinitesimal generator} $\Gc$ of our Markov process acts on a \emp{cylinder function} $\vphi:\Omega\to\Rbb$ (one that depends only on a finite number of coordinates of $\omb\in\Omega$) as
\begin{equation}\label{eq:infgen}
\begin{aligned}
\big(\Gc\,\vphi\big)(\omb)
=&\sum_{j\in\Zbb}\,p(\om_{j},\om_{j+1})\cdot\big(\vphi(\omb-\delta_j+\delta_{j+1})-\vphi(\omb)\big)\\
+&\sum_{j\in\Zbb}\,q(\om_{j},\om_{j+1})\cdot\big(\vphi(\omb+\delta_j-\delta_{j+1})-\vphi(\omb)\big).
\end{aligned}
\end{equation}
If $p$ and $q$ are bounded functions on $\Ic^2$ then the above Markov process can be constructed on $\Omega$ in an appropriate manner having generator $\Gc$ (see \cite[Chapter 1]{ips}). In other cases, existence of the dynamics can only be established by posing further (growth) conditions on the rates (see \cite{and}, \cite{exists} and further references therein). Within the scope of this article we do not intend to deal with this issue in general, though we will discuss some models with unbounded rates in Section \ref{sec:partmodels}. From now on we assume that the processes can be constructed with appropriate initial data in $\Omega$ with the above dynamics. In the next subsection we introduce the \emp{attractiveness assumption} which will further tighten the model class.

\subsection{Second class particles}\label{sec:scp}
Pick two configurations $\omb$ and $\etab$ both in $\Omega$ aligning them coordinate-wise. Then one can define the \emp{number} $n_i=\abs{\om_i-\eta_i}$ and the \emp{sign} $s_i=\ind\{\om_i-\eta_i>0\}-\ind\{\om_i-\eta_i<0\}$ of \emp{signed second class particles} at position $i\in\Zbb$ in the configuration pair $(\omb,\,\etab)$. In particular, if
\begin{equation}\label{eq:lonescp}
\omb = \etab + \delta_0 \qquad (\omb = \etab - \delta_0),
\end{equation}
then we say that a single positive (negative) second class particle is placed at the origin in $(\omb,\etab)$. To allow second class particles evolve in time we use the \emp{basic, ``particle-to-particle'', coupling}, that is for each time $t>0$ and lattice point $i\in\Zbb$, a hop to the right can occur in both systems:
\begin{equation*}
\big(\omb,\etab\big) \longrightarrow \big(\omb-\delta_i+\delta_{i+1},\, \etab-\delta_i+\delta_{i+1}\big)\in\Omega\times\Omega
\end{equation*}
with rate $\min\big(p(\om_i,\om_{i+1}), p(\eta_i,\eta_{i+1})\big)$; while ``compensating'' right jumps occur according to the following rules with respective rates:
\begin{center}
\begin{tabular}{c|c|c}
& $\big(\omb-\delta_i+\delta_{i+1},\, \etab\big)$ & $\big(\omb,\, \etab-\delta_i+\delta_{i+1}\big)$\\[0.25em]
\hline\\[-1.2em]
$\big(\omb,\etab\big)$ & $\big(p(\om_i,\om_{i+1})-p(\eta_i,\eta_{i+1})\big)^+$ &
$\big(p(\om_i,\om_{i+1})- p(\eta_i,\eta_{i+1})\big)^-$
\end{tabular}\ .
\end{center}
Here $(\,\cdot\,)^+$ and $(\,\cdot\,)^-$ denote the positive and negative part function, respectively.
The coupling tables for the left jumps can be obtained analogously.
Note that a second class particle can hop only if a compensating step occurs. Also notice that under the basic coupling the marginal processes, that is $(\omb(t))_{t\geq0}$ and $(\etab(t))_{t\geq0}$, follow the same stochastic evolution rules \eqref{eq:dynwpq}. Now, recall the following notion from \cite[Definition 2.3, pp. 72]{ips}.
\begin{definition}[Attractiveness]
We say that the dynamics defined by the infinitesimal generator $\Gc$ of \eqref{eq:infgen} is \emp{attractive},
if the initial dominance $\etab(0)\leq \omb(0)$ implies the one $\etab(t)\leq \omb(t)$ for all times $t>0$ under the basic coupling.
\end{definition}
From now on we will always assume that the processes we consider are attractive. It is not hard to see that this is equivalent to saying that the rate $p$ ($q$) is monotone non-decreasing (non-increasing) in its first and monotone non-increasing (non-decreasing) in its second variable.

In attractive processes, the above basic coupling tables reveal some extra properties for the second class particles. In particular, having initial configurations as in \eqref{eq:lonescp}, a.s.\ there will always be a single second class particle in the system, the position of which will be denoted by $Q(t)$ at time $t$. More generally, one can see that the total number $\sum_{j\in\Zbb}\abs{\om_j(t)-\eta_j(t)}=\sum_{j\in\Zbb}n_j\,$ of (positive and negative) second class particles is non-increasing in time.

\section{Main results}\label{sec:mainr}

The results below heavily rely on the hydrodynamic description of particle systems. For the time being we skip the rather technical details of hydrodynamic limit theory. We refer to Subsections \ref{sec:asymhydro} and \ref{sec:symhydro} which are devoted to the precise definitions and statements on hydrodynamics where all the missing elements are fully expounded. Our first two results concern the limit distribution of the position of the second class particle.
\begin{theorem}[Speed of the second class particle in asymmetric models]\label{thm:scphydroasym}
Suppose Assumption \ref{assump:marginalmeasure}. Then start a second class particle at the origin from the product coupling measure $\hat\mu^{\vrho,\,\la}$ ($\vrho\neq\la$) (see \eqref{eq:scpstart}), where the underlying model of first class particles $\omb$ can \emp{either} be
\begin{itemize}
\item \emp{any attractive process} with bounded one site occupation numbers ($-\infty<\omin,\omax<+\infty$) and we have \emp{no} further assumptions on the measure $\nu$; \emp{or}
\item a \emp{misanthrope process} with \emp{bounded rate} functions (but not necessarily bounded occupations). In this case $\nu$ is restricted to be a stationary marginal.
\end{itemize}
Then we have the limit
\begin{equation}\label{eq:scpspeedlim}
\lim_{N\to\infty}\hat\Prob\bigg\{\frac{Q(Nt)}N\leq x\bigg\} = \frac{\vrho - u(x,t)}{\vrho - \la}
\end{equation}
for every $x\in\Rbb$ that is a continuity point of $u(\,\cdot\,,t)$, where $u\in\Rbb\times\Rbb_0^+$ is the unique entropy solution of the conservation law $\partial_t u + \partial_x G(u) = 0$ with step initial datum $u(0, x)=\vrho\ind\{x\leq 0\}+\la\ind\{x>0\}$ and hydrodynamic flux function $G$.
\end{theorem}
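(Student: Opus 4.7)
The plan is to extend the Ferrari--Kipnis strategy by exploiting the signed coupling measure behind $\hat\mub^{\vrho,\la}$ (cf.\ Section~\ref{sec:thed}). First I would observe that, by the construction of $\hat\mub^{\vrho,\la}$ together with attractiveness and signed-mass conservation under the basic coupling, $\hat\Prob$-a.s.\ there is \emph{exactly one} second class particle at every time $t \geq 0$, with $\omb(t) \geq \etab(t)$ coordinate-wise. These facts combine into the pointwise identity
\[
\ind\{Q(t) \leq x\} = \sum_{j \leq x}\bigl(\om_j(t) - \eta_j(t)\bigr) \qquad \hat\Prob\text{-a.s.},
\]
so taking expectations reduces $\hat\Prob\{Q(t) \leq x\}$ to a sum of one-site discrepancies at time~$t$.

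Next I would invoke the fundamental identity of Section~\ref{sec:scpdistr} to recast this sum in terms of one-process expectations of the ambient system. Concretely, $(\vrho - \la)\,\hat\mub^{\vrho,\la}$ is the ``second class particle at the origin'' component of a \emph{signed} coupling $\tilde\mub^{\vrho,\la}$ on $\Omega \times \Omega$ whose two site-wise marginals are $\sigb^{\vrho,\la}$ and a one-site left translate of $\sigb^{\vrho,\la}$; on the complementary ``no second class particle'' component, $\omb = \etab$, so $\sum_{j \leq x}(\om_j - \eta_j)(t)$ vanishes identically. Linearity across the two components, together with translation invariance of the dynamics, then gives
\[
(\vrho - \la)\,\hat\Prob\{Q(t) \leq x\} = \sum_{j \leq x}\Bigl(\Exp^{\sigb^{\vrho,\la}}[\om_j(t)] - \Exp^{\sigb^{\vrho,\la}}[\om_{j+1}(t)]\Bigr),
\]
which telescopes. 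Using that $\Exp^{\sigb^{\vrho,\la}}[\om_J(t)] \to \vrho$ as $J \to -\infty$ (the left-boundary density is preserved by the mass-conserving dynamics), the sum collapses to $\vrho - \Exp^{\sigb^{\vrho,\la}}[\om_{x+1}(t)]$.

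It then remains to set $t \mapsto Nt$, $x \mapsto \lfloor Nx \rfloor$ and to invoke the hydrodynamic limit of Section~\ref{sec:asymhydro}, which guarantees $\Exp^{\sigb^{\vrho,\la}}[\om_{\lfloor Nx \rfloor + 1}(Nt)] \to u(x, t)$ at every continuity point $x$ of $u(\cdot, t)$; dividing by $\vrho - \la$ then yields \eqref{eq:scpspeedlim}. The two alternatives of the theorem share this reduction verbatim and differ only in which hydrodynamic input is available: the bounded-occupation bullet uses the general attractive hydrodynamic theorem, whereas the misanthrope bullet leans on the stationarity of $\nu$ to control unbounded occupation numbers. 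The main technical obstacle I anticipate is the justification of the infinite-sum/signed-expectation exchange in the fundamental-identity step, which should be handled via the uniform a.s.\ bound $\sum_{j \leq x}(\om_j - \eta_j)(t) \in \{0, 1\}$ on the support of $\tilde\mub^{\vrho,\la}$ combined with dominated convergence on its Hahn decomposition; the left-boundary convergence $\Exp^{\sigb^{\vrho,\la}}[\om_J(t)] \to \vrho$ is automatic in the bounded-occupation setting and again uses stationarity of $\nu$ in the misanthrope case.
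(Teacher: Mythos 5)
Your first two steps are essentially the paper's own route: the identity $\ind\{Q(t)\le x\}=\sum_{j\le x}(\om_j(t)-\eta_j(t))$ together with the signed coupling whose marginals are $\sigb^{\vrho,\,\la}$ and $\tau_1\sigb^{\vrho,\,\la}$, with the diagonal (possibly negative-mass) component contributing zero, is exactly the mechanism behind Theorem \ref{thm:scpdistr}. Your telescoping variant, which needs $\Exp_{\sigb^{\vrho,\,\la}}\om_J(t)\to\vrho$ as $J\to-\infty$, is a harmless deviation: the paper instead closes the computation exactly at finite $n$ via the height functions and the shift coupling $\eta_i(s):=\om_{i+1}(s)$, thereby avoiding any boundary limit, but your limit can be justified by attractiveness, domination by $\sigb^{\vrho,\,\vrho}$ and finite propagation speed (an argument of the same type appears in the proof of Theorem \ref{thm:scpbackground}).

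The genuine gap is your final step, where you assert that the hydrodynamic limit ``guarantees'' $\Exp_{\sigb^{\vrho,\,\la}}\om_{[Nx]+1}(Nt)\to u(x,t)$ at continuity points. It does not. Theorems \ref{thm:rezakhydro} and \ref{thm:bagurasa} give only the weak (spatially averaged) statement: convergence of $\frac1N\sum_j\psi(j/N)\om_j(Nt)$ against compactly supported test functions, in probability (bounded case) or in expectation (misanthrope case). Pointwise convergence of the one-site expectation is the \emph{conservation of local equilibrium}, which is strictly stronger than Definition \ref{def:hydrolimit}; the paper states explicitly that it ``does not appear to be an immediate consequence'' and devotes Proposition \ref{prop:landimlocconv} (Landim's argument) to bridging it. That bridge is not free: it uses attractiveness to sandwich $\Exp_{\sigb^{\vrho,\,\la}}\vphi(\tau_{[Nx]}\omb(Nt))$ between block averages of shifted initial measures $\tau_{[N\veps]}\sigb^{\vrho,\,\la}$, then the existence of a density-parametrized, stochastically ordered and \emph{continuous} family of extremal stationary measures (supplied by \cite{bagurasa} in the bounded case and by Theorem \ref{thm:prodstatergod} in the misanthrope case), plus monotonicity in $x$, uniform boundedness and $\veps$-continuity of the entropy solutions $u^{\veps}$, and, in the bounded-occupation case, a separate conversion of convergence in probability into convergence in expectation. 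Without supplying this step (or an equivalent local-equilibrium statement), the passage from \eqref{eq:findispldistr} to \eqref{eq:scpspeedlim} does not go through, and this is also where the two bullets of the theorem actually differ — not merely in ``controlling unbounded occupation numbers'' via stationarity, but in which hydrodynamic input and which family of invariant measures make Proposition \ref{prop:landimlocconv} applicable.
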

Next a couple of comments. First, we underline that for systems with bounded occupation numbers the limit \eqref{eq:scpspeedlim} holds for \emp{any} choice of marginal distribution $\nu$ satisfying Assumption \ref{assump:marginalmeasure}. On the other hand, we note that the misanthrope family of processes forms a large and important part of attractive particle systems. The rate functions of these satisfy further combinatorial identities which enable one to give a full description of the translation invariant stationary distributions. The corresponding results will be recapitulated in Section \ref{sec:misa} (see Theorem \ref{thm:prodstatergod}).

The hydrodynamic flux $G$ (which will be defined later in \eqref{eq:hydrofluxdef} in Section \ref{sec:hydro}) roughly speaking describes the average signed rate of jumping particles across a bond in stationarity. In some models strict concavity or convexity of $G$ has been established and it is then well understood that the Riemann (or step) initial condition ($\vrho\ind\{x\leq 0\}+\la\ind\{x>0\}$) develops shock or rarefaction fan solutions depending on the order of $\vrho$ and $\la$ and on concavity or convexity of $G$. In a shock, the limiting probability \eqref{eq:scpspeedlim} is of $0$-$1$ form which means convergence of the scaled second class particle position to the deterministic velocity of the shock. In a rarefaction fan we have convergence to a random velocity. This randomness is uniform for the totally asymmetric simple exclusion process, as $u$ is a linear function of its first -- spatial -- argument which has been first observed by P.\ A.\ Ferrari and C.\ Kipnis \cite{serf} but this distribution might vary with other models. We highlight that there are attractive models with product-form stationary distributions but with non-concave, non-convex hydrodynamic fluxes. In the associated conservation laws coexistence of shocks and rarefaction fans is possible, in which cases our result shows that the limit distribution of the velocity of the second class particle is \emp{mixed} with a discrete mass and a continuous counterpart (see for e.g.\ the $2$-type model of Section \ref{sec:partmodels}).

Our arguments are general enough to include \emph{symmetric models} as well which have interesting consequences for the second class particle in this case. The analogous result follows, and we will illustrate its significance with the symmetric zero range processes at the end of Section \ref{sec:partmodels}. See the definition of a gradient process and the related quantity $d$ in Section \ref{sec:symhydro}.
\begin{theorem}[Speed of the second class particle in symmetric models]\label{thm:scpdiffusivelim}
Suppose $-\infty<\omin$, Assumption \ref{assump:marginalmeasure} and let $\omb$ be a symmetric gradient process which is attractive. Then we have the limit
\begin{equation}\label{eq:scpdiffusivelim}
\lim_{N\to\infty}\hat\Prob\bigg\{\frac{Q(Nt)}{\sqrt{N}}\leq x\bigg\} = \frac{\vrho - u(x,t)}{\vrho - \la}
\end{equation}
for every continuity point $x\in\Rbb$ of $u(\,\cdot\,,t)$ provided that $\Exp_{\sigb^{\vrho,\,\vrho}}\omb_0(t)^2<+\infty$ holds for every $t\geq0$, where $u\in\Rbb\times\Rbb_0^+$ is the unique weak solution to the parabolic partial differential equation $\partial_t u=\text{{\small$\frac{1}{2}$}}\, \Delta d(u)$ with step initial datum $u(0, x)=\vrho\ind\{x\leq 0\}+\la\ind\{x>0\}$ and diffusivity coefficient $d$.
\end{theorem}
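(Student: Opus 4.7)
The plan is to mirror the proof of Theorem \ref{thm:scphydroasym}, replacing the hyperbolic hydrodynamic input with its parabolic counterpart from Section \ref{sec:symhydro}. The engine common to both theorems will be the fundamental identity of Section \ref{sec:scpdistr}, which expresses the distribution of $Q(t)$ in terms of the one-site expected density of the ambient process started from the step profile $\sigb^{\vrho,\la}$; schematically,
\begin{equation*}
\hat\Prob\{Q(t)\leq i\}\;=\;\frac{\vrho-\Exp_{\sigb^{\vrho,\la}}\om_i(t)}{\vrho-\la},\qquad i\in\Zbb,\;t\geq0.
\end{equation*}
This identity converts the question about the second class particle into one about a single marginal expectation of the uncoupled step-profile dynamics, for which hydrodynamic theory applies directly.

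I would then substitute $i=\lfloor\sqrt{N}\,x\rfloor$ and replace $t$ by $Nt$ in the identity to obtain
\begin{equation*}
\hat\Prob\Big\{\tfrac{Q(Nt)}{\sqrt{N}}\leq x\Big\}\;=\;\frac{\vrho-\Exp_{\sigb^{\vrho,\la}}\om_{\lfloor\sqrt{N}x\rfloor}(Nt)}{\vrho-\la},
\end{equation*}
so that the theorem reduces to showing $\Exp_{\sigb^{\vrho,\la}}\om_{\lfloor\sqrt{N}x\rfloor}(Nt)\to u(x,t)$ at every continuity point $x$ of $u(\,\cdot\,,t)$. The parabolic hydrodynamic limit for symmetric attractive gradient systems, to be invoked from Section \ref{sec:symhydro}, delivers convergence of block averages of $\om_{\lfloor\sqrt{N}\,\cdot\,\rfloor}(Nt)$ to $u(\,\cdot\,,t)$ tested against smooth functions. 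Passing from a block average to the one-site expectation at $\lfloor\sqrt{N}x\rfloor$ is routine, using continuity of $u(\,\cdot\,,t)$ at $x$ together with the (near) translation invariance of $\sigb^{\vrho,\la}$ away from the origin.

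The genuine technical obstacle is upgrading the weak hydrodynamic convergence (which typically holds in probability, or in $L^1$ after spatial averaging) to convergence of the one-site \emph{expectation}. This is exactly where the hypothesis $\Exp_{\sigb^{\vrho,\vrho}}\om_0(t)^2<+\infty$ enters: by attractiveness and the stochastic dominance built into Assumption \ref{assump:marginalmeasure}, the process started from $\sigb^{\vrho,\la}$ is pointwise dominated under the basic coupling by the stationary process started from $\sigb^{\vrho,\vrho}$, so the family $\{\om_{\lfloor\sqrt{N}x\rfloor}(Nt)\}_{N\geq1}$ is $L^2$-bounded uniformly in $N$ and hence uniformly integrable. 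This is the symmetric analogue of the bounded-occupation hypothesis in Theorem \ref{thm:scphydroasym}, and it cannot be dropped because without it even a symmetric zero range process could permit mass to escape in expectation without contradicting a convergence-in-probability statement.

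Once uniform integrability is secured, $\Exp_{\sigb^{\vrho,\la}}\om_{\lfloor\sqrt{N}x\rfloor}(Nt)\to u(x,t)$ follows at every continuity point, the fundamental identity closes the argument, and the continuity-point caveat is handled exactly as in the asymmetric case. The main difficulty I anticipate is therefore the rigorous passage from the (spatially averaged, parabolic) hydrodynamic statement to the pointwise expected density at a single lattice site; the second-moment hypothesis is the essentially tight tool that makes this step go through.
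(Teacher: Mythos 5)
Your overall architecture is the same as the paper's: the fundamental identity \eqref{eq:findispldistr} reduces everything to showing $\Exp_{\sigb^{\vrho,\,\la}}\om_{[\sqrt{N}x]}(Nt)\to u(x,t)$ at continuity points, the parabolic hydrodynamic limit of attractive gradient systems supplies convergence of block averages, and the hypothesis $\Exp_{\sigb^{\vrho,\,\vrho}}\om_0(t)^2<+\infty$ is used (after shifting so that $\omin=0$, which is why $-\infty<\omin$ is assumed) through attractiveness to get $L^2$-boundedness, hence to upgrade the in-probability hydrodynamic statement to convergence of \emph{expectations} of the averaged quantities, i.e.\ to \eqref{eq:landimexpvlimit}. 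Up to that point you match the paper.

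The genuine gap is the step you label ``routine'': passing from the block-averaged statement to the one-site expectation at $[\sqrt{N}x]$. Your justification --- continuity of $u(\,\cdot\,,t)$ at $x$ plus ``(near) translation invariance of $\sigb^{\vrho,\,\la}$ away from the origin'' plus uniform integrability --- does not work. The site $[\sqrt{N}x]$ at time $Nt$ lies well inside the region influenced by the step discontinuity (otherwise $u(x,t)$ would simply be $\vrho$ or $\la$), so the law of the configuration near that site is in no useful sense translation invariant; and the hydrodynamic limit gives no distributional control whatsoever on a single site, so uniform integrability of $\om_{[\sqrt{N}x]}(Nt)$ cannot convert an averaged limit into a pointwise one. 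This localization is exactly the content of Proposition \ref{prop:landimlocconv} (Landim's argument), whose proof the paper reuses verbatim after observing that the time scaling is irrelevant: one squeezes $\Exp_{\sigb^{\vrho,\,\la}}\vphi(\tau_{[Nx]}\omb(Nt))$ between block averages of processes started from the \emph{shifted} step measures $\tau_{[N\veps]}\sigb^{\vrho,\,\la}$, using attractiveness and the stochastic ordering of these translates, and then exploits the monotonicity of the limit profiles $u^{\veps}$, the ordered continuous family of stationary measures, and a truncation $\vphi\wedge M$ to pass $\veps\to0$. Some monotonicity input of this kind (equivalently, the spatial monotonicity of $n\mapsto\Exp_{\sigb^{\vrho,\,\la}}\om_n(t)$ noted after Theorem \ref{thm:scpdistr}) is indispensable; without it your argument stalls precisely at the step you identified as the main difficulty. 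A minor further slip: $\sigb^{\vrho,\,\vrho}$ need not be stationary under Assumption \ref{assump:marginalmeasure}; only the stochastic domination (valid because $\om\geq 0$ after the shift) is used.
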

Symmetric gradient processes and their hydrodynamic properties (diffusivity) will be rigorously discussed in Subsection \ref{sec:symhydro}.

Finally, we focus on the interaction of two second class particles of opposite charges dropped into the system initially. Denote by $\Nc(t)$ the total number of second class particles present in the system at time $t$. For the long-time behavior of $\Nc$ we have the following result.
\begin{theorem}[Collision probability of second class particles]\label{thm:collprob}
Assume that $\omin$ and $\omax$ are finite numbers. Let $(\hat\omb,\hat\etab)$ be any pair of attractive systems starting from the deterministic initial configurations
\begin{equation*}
\hat\omb_0 = \hat\etab_0 -\delta_0 + \delta_1,\qquad \hat\etab_0 = \omax\ind\{i\leq 0\} + \omin\ind\{i>0\}
\end{equation*}
and evolving according to the basic coupling. Then
\begin{equation}\label{eq:collprobestimate}
\hat\Prob\big\{\Nc(t)=2\text{ for all }t\geq0\big\}\geq
\frac{\bar{G}(1)}{p(\omax, \omin)} =\,: C_0,
\end{equation}
where $\bar{G}(1) = \limsup_{N\to+\infty}\Exp_{\hat\etab_0}[p(\hat{\eta}_0(N),\hat{\eta}_1(N))-q(\hat{\eta}_0(N),\hat{\eta}_1(N))]$, while $\hat\Prob$ denotes the associated probability of $(\hat\omb(t),\hat\etab(t))_{t\geq0}$.

In particular, if the dynamics is totally asymmetric ($q\equiv0$) then $C_0>0$ holds. On the other hand, considering one of the misanthrope processes (described by Theorem \ref{thm:prodstatergod}) we have $\bar{G}(1) = G(u(0,1))$ provided that $0$ is a continuity point of $u(\,\cdot\,,1)$, where $G$ is the hydrodynamic flux (defined in \eqref{eq:hydrofluxdef}) and $u$ is the unique entropy solution to $\partial_t u + \partial_x G(u) = 0$ with step initial datum ($\omax\ind\{x\leq 0\} + \omin\ind\{x>0\}$).
\end{theorem}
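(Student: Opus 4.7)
My plan is to reduce the survival question to a flux identity across the bond initially separating the two second class particles. Let $\tau$ denote the first annihilation time of the pair, set to $+\infty$ on the event $\{\Nc(t)=2\text{ for all }t\geq 0\}$, and define $V(t):= J^{\etab}_{0,1}(t) - J^{\omb}_{0,1}(t)$, the difference of the signed currents of $\etab$ and $\omb$ across bond $(0,1)$ up to time $t$. Splitting the coupled conservation law $\sum_i(\omega_i(t)-\eta_i(t))\equiv 0$ over the half-lattice $\{i\geq 1\}$ and using $\hat\omb_0-\hat\etab_0=-\delta_0+\delta_1$ gives the pathwise identity
\[
V(t) \;=\; 1 \;-\; \sum_{i\geq 1}\bigl(\omega_i(t)-\eta_i(t)\bigr).
\]
Because the basic coupling never creates second class particles and the initial data has exactly one positive SCP (at site $1$) and one negative SCP (at site $0$), the signed difference $\omb(t)-\etab(t)$ is supported on at most two sites with values in $\{-1,0,+1\}$ (and vanishes after $\tau$). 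Hence $V(t)\in\{0,1,2\}$ deterministically, so $\hat\Exp[V(t)]\leq 2$ uniformly in $t$.

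Next I apply Dynkin's formula using the basic-coupling rate table of Section~\ref{sec:scp} to obtain the compensator representation
\[
\hat\Exp[V(t)] \;=\; \int_0^t\!\Bigl(\Exp_{\hat\etab_0}\bigl[(p-q)(\eta_0,\eta_1)(s)\bigr] - \Exp_{\hat\omb_0}\bigl[(p-q)(\omega_0,\omega_1)(s)\bigr]\Bigr)\,ds.
\]
The first integrand is bounded above by $p(\omax,\omin)$ for all $s$ (by attractiveness and nonnegativity of $q$) and equals exactly $p(\omax,\omin)$ at $s=0$, since non-degeneracy forces $q(\omax,\omin)=0$; its $\limsup$ as $s\to\infty$ is $\bar G(1)$ by definition. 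Because $\hat\omb_0$ differs from $\hat\etab_0$ at only two sites, the two marginals share the same hydrodynamic limit, and the second integrand has the same $\limsup$. On $\{\tau<\infty\}$ the marginals coincide after $\tau$ and $V$ becomes constantly $1$; on $\{\tau=\infty\}$ both SCPs persist forever and $V(\infty)$ is determined by their asymptotic positions relative to the origin. Matching the two expressions for $\hat\Exp[V(\infty)]$, combined with the extremal upper bound $p(\omax,\omin)$ on the $\etab$-flux, will yield the target inequality $p(\omax,\omin)\cdot\hat\Prob\{\tau=\infty\}\geq \bar G(1)$.

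The main obstacle will be this final matching step: turning the pathwise bound $V(t)\leq 2$ into the quantitative survival estimate requires a careful stopping-time accounting at $\tau$, so that the gap between the maximal rate $p(\omax,\omin)$ at time $0$ and the asymptotic rate $\bar G(1)$ is correctly allocated between the annihilation event and the persistent escape of the two SCPs. The identification $\bar G(1)=G(u(0,1))$ in the misanthrope setting is then immediate from the hydrodynamic limit of Section~\ref{sec:hydro} applied to the extremal step data: at macroscopic time $1$ and position $0$ the density converges to $u(0,1)$, and by translation invariance of the product stationary measures (Theorem~\ref{thm:prodstatergod}) the expected bond-flux at $(0,1)$ converges to the stationary flux $G(u(0,1))$. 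In the totally asymmetric case, $\bar G(1)>0$ (hence $C_0>0$) follows at once because the step profile places the origin strictly inside the rarefaction fan, where $G$ is strictly positive.
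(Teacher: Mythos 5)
Your setup reproduces the paper's two correct ingredients: the pathwise identity $V(t)=1-\sum_{i\geq1}(\hat\om_i(t)-\hat\eta_i(t))$ (which is exactly the paper's relation $1+h_{\oh}(t)-g_{\oh}(t)=\sum_{j\geq1}(\hat\om_j(t)-\hat\eta_j(t))\leq\ind\{\Nc(s)=2\ \forall s\leq t\}$), and the Kolmogorov backward representation of each marginal's expected current across the bond $(0,1)$. But the step you yourself flag as "the main obstacle" is precisely the missing idea, and it cannot be completed from the ingredients you list. Knowing that the $\etab$-flux is bounded by $p(\omax,\omin)$ and that the two flux integrands share the same $\limsup=\bar G(1)$ gives no control whatsoever over $\int_0^\infty\big(\Exp_{\hat\etab_0}(p-q)(\eta_0,\eta_1)(s)-\Exp_{\hat\omb_0}(p-q)(\om_0,\om_1)(s)\big)\di s$: this integral is finite (it equals $\hat\Exp V(\infty)\in[0,2]$), and its value depends on the finite-time discrepancy of the two flux curves, not on their common asymptotics. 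So no "matching at $t=\infty$" of the kind you describe can produce the constant $\bar G(1)/p(\omax,\omin)$; indeed the bound $\hat\Prob\{\tau=\infty\}\geq 1-\hat\Exp V(\infty)$ that your accounting would yield has no quantitative content without an exact evaluation of the current difference.

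What the paper does instead, and what is absent from your proposal, is a renewal-type Kolmogorov \emph{forward} identity that exploits the special structure of $\hat\etab_0$: from the extremal step configuration the only admissible transition is the single jump over the bond $(0,1)$, at rate $\bar r=p(\omax,\omin)$, and it lands exactly on $\hat\omb_0$. Conditioning on the first $\veps$ of time gives $\frac{\di}{\di t}\Exp_{\hat\etab_0}g_{\oh}(t)=\bar r\,\big(1+\Exp_{\hat\omb_0}h_{\oh}(t)-\Exp_{\hat\etab_0}g_{\oh}(t)\big)$, so the difference of expected currents at time $t$ equals $\frac{1}{\bar r}\Exp_{\hat\etab_0}[(p-q)(\eta_0(t),\eta_1(t))]$ \emph{exactly}; combined with the coupling inequality above and $\limsup_{t\to\infty}$ this delivers $C_0=\bar G(1)/p(\omax,\omin)$. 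Two secondary points: (i) positivity of $C_0$ in the totally asymmetric case is proved in the paper for \emph{any} attractive system by a soft contradiction argument (if $\Exp\, p(\hat\eta_0(t),\hat\eta_1(t))\to0$ then $\hat\eta_0(t)=\omin$ or $\hat\eta_1(t)=\omax$ with probability tending to one, which is impossible), whereas your rarefaction-fan argument presupposes hydrodynamics and strict interior positivity of $G$ that are not available in the general attractive setting of the first bullet; (ii) the identification $\bar G(1)=G(u(0,1))$ is not "immediate from the hydrodynamic limit" of the density: it requires conservation of local equilibrium, i.e.\ Proposition \ref{prop:landimlocconv} applied to the cylinder function $\vphi(\omb)=p(\om_0,\om_1)-q(\om_0,\om_1)$ after Theorem \ref{thm:rezakhydro}.
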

The previous assertion tells that two second class particles of distinct charges initially placed at lattice points $0$ and $1$ will \emp{never} meet with positive probability provided that the constant $C_0$ of \eqref{eq:collprobestimate} is positive. For the asymmetric simple exclusion process with $\Ic=\{0,1\}$ and rate functions $p(\om_i,\om_{i+1})=\bar{p}\cdot\om_i\cdot(1-\om_{i+1})$ and $q(\om_i,\om_{i+1})=(1-\bar{p})\cdot \om_{i+1}\cdot(1-\om_{i})$ $(i\in\Zbb)$, where $\bar{p}\in(\oh,1]$, we recover the result \cite[Theorem 2]{serf}, if $\bar{p}=1$.
Indeed, we know exactly from \cite[Theorem 2.3]{fgmcollision} that for each $\bar{p}\in(\oh,1]$
\begin{equation}\label{eq:collprob}
\hat\Prob\big\{\Nc(t)=2\text{ for all }t\geq0\big\} = \frac{2\bar{p}-1}{3\bar{p}}
\end{equation}
for which \eqref{eq:collprobestimate}, $C_0$ being $\frac{2\bar{p}-1}{4\bar{p}}$, gives a \emp{non-sharp} lower bound. Formula \eqref{eq:collprob} was also derived from a more general model, known as the multi-type (T)ASEP speed process, in \cite[Theorem 1.12]{amirangel}.

\section{Additional results}\label{sec:furth}

In this section we state additional results, following from very general coupling arguments, that give further insight to phenomena under the initial distribution \eqref{eq:scpstart}. We first indicate where this initial distribution comes from. Then an intermediate step towards main Theorems \ref{thm:scphydroasym} and \ref{thm:scpdiffusivelim}, without any reference to hydrodynamics, is shown. Finally, we proceed with an invariance property of the model at the site of the second class particle.

\subsection{The distribution $\hat{\nu}^{\vrho,\,\la}$}\label{sec:thed}
The following will demonstrate why the measure \eqref{eq:scpstart} serves as a natural choice for initial distribution.
\begin{proposition}\label{prop:probcouplingexist}
Suppose that Assumption \ref{assump:marginalmeasure} holds and let $\vrho-1\leq\la<\vrho$, where $\vrho,\la\in\Dc$ are fixed. Then there exists a joint \emp{probability} measure $\nu^{\vrho,\,\la}$ with $\nu^\vrho$ and $\nu^\la$ as respective marginals and with $\nu^{\vrho,\,\la}(\{(x,y):x-y\in\{0,1\}\})=1$ \emp{if and only if}
\begin{equation}\label{eq:couplingstrict}
\nu^{\vrho}(\{z:z\leq y\})\geq \nu^{\la}(\{z:z\leq y-1\})
\end{equation}
holds for every $y\in\Ic$. In this case $\hat{\nu}^{\vrho,\,\la}$ can be obtained as
\begin{equation}\label{eq:condmeasure}
\hat\nu^{\vrho,\,\la}(\,\cdot\,)=
\nu^{\vrho,\,\la}\big(\,\cdot\,\big|\,\om_0=\eta_0+1\big)=
\frac{\nu^{\vrho,\,\la}(\,\cdot\,\cap\{(x,y):x=y+1\})}{\nu^{\vrho,\,\la}(\{(x,y):x=y+1\})},
\end{equation}
where $0<\nu^{\vrho,\,\la}(\{(x,y):x=y+1\})=\vrho-\la\leq 1$.
\end{proposition}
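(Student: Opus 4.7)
The key observation is that the support constraint $x-y\in\{0,1\}$, together with prescribed marginals $\nu^\vrho$ and $\nu^\la$, forces a triangular linear system on the two ``stripes'' $\{x=y\}$ and $\{x=y+1\}$ whose solution, if it exists, is unique. The plan is to (a) carry out the forward direction while simultaneously reading off an explicit formula for the unique candidate, (b) verify that this formula does define a probability measure precisely under \eqref{eq:couplingstrict}, and finally (c) compute the mass of the off-diagonal stripe and identify the conditional law with $\hat\nu^{\vrho,\la}$.

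For the \emph{necessity} direction I would suppose $\nu^{\vrho,\la}$ exists and write $F_\vrho(y):=\nu^\vrho(\{z\le y\})$ and $F_\la(y-1):=\nu^\la(\{z\le y-1\})$ as sums over the two allowed support points $(x,x)$ and $(x,x-1)$. Subtracting and reindexing the off-diagonal contribution via $x=z+1$ makes all off-diagonal terms cancel in pairs and collapses the diagonal telescope to a single surviving term, yielding the identity $\nu^{\vrho,\la}(y,y)=F_\vrho(y)-F_\la(y-1)$. Nonnegativity of the left-hand side is exactly \eqref{eq:couplingstrict}. Combining this with the marginal equation along $y$ then pins down the off-diagonal weight as $\nu^{\vrho,\la}(y+1,y)=F_\la(y)-F_\vrho(y)$, simultaneously giving uniqueness of the coupling.

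For \emph{sufficiency} I would simply take these two formulas as the \emph{definition} of $\nu^{\vrho,\la}$; nonnegativity on the diagonal is \eqref{eq:couplingstrict}, nonnegativity off-diagonal is the stochastic-dominance half of Assumption \ref{assump:marginalmeasure}, and the two marginal identities reduce to one-line telescopes in which the $F_\vrho(y)$, respectively $F_\la(x-1)$, terms cancel. To obtain \eqref{eq:condmeasure} it remains to compute the normalizer $\nu^{\vrho,\la}(\{x=y+1\})=\sum_{y\in\Ic}(F_\la(y)-F_\vrho(y))$. I would apply Abel summation to the mean-parametrization identity $\vrho-\la=\sum_{y\in\Ic}y\bigl(\nu^\vrho(y)-\nu^\la(y)\bigr)$ to recognize this sum as precisely $\vrho-\la$. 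The main technical point I expect is justifying the vanishing of the boundary terms in the Abel summation when $\Ic$ is infinite; here the parametrization by the (finite) mean and the stochastic ordering give enough control on the tails $y\bigl(F_\la(y)-F_\vrho(y)\bigr)$ at both ends of $\Ic$. The hypothesis $\vrho-1\le\la<\vrho$ then ensures $\vrho-\la\in(0,1]$, so dividing the off-diagonal weight by this normalizer reproduces formula \eqref{eq:scpcouplatorig} exactly and establishes \eqref{eq:condmeasure}.
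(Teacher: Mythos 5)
Your proposal is correct and follows essentially the same route as the paper: the support constraint plus the two marginal equations force the unique stripe weights $\nu^{\vrho,\,\la}(y,y)=\nu^{\vrho}(\{z\le y\})-\nu^{\la}(\{z\le y-1\})$ and $\nu^{\vrho,\,\la}(y+1,y)=\nu^{\la}(\{z\le y\})-\nu^{\vrho}(\{z\le y\})$ (the paper gets them by recursion from the boundary of $\Ic$, you by an equivalent telescoping that is if anything cleaner when $\omin=-\infty$), nonnegativity of the diagonal is exactly \eqref{eq:couplingstrict}, and the off-diagonal mass is $\vrho-\la$. Your Abel-summation argument with the tail control $y\bigl(F_\la(y)-F_\vrho(y)\bigr)\to0$ merely supplies the justification of the identity $\sum_{y\in\Ic}\nu^{\vrho,\,\la}(y+1,y)=\vrho-\la$ that the paper states without proof, so no gap.
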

Under the narrower assumptions of Proposition \ref{prop:probcouplingexist}, we can set up another measure, namely
\begin{equation*}
\mub^{\vrho,\,\la}:\,=
\bigotimes_{i=-\infty}^{-1}\nu^{\vrho,\,\vrho}\otimes\nu^{\vrho,\,\la}\otimes\bigotimes_{i=1}^\infty\nu^{\la,\,\la},
\end{equation*}
which we can call the \emp{unconditional version} of $\hat{\mub}^{\vrho,\,\la}$, since this latter can be obtained from $\mub^{\vrho,\,\la}$ by conditioning on the existence of a single second class particle at the origin.

Some, but not all, interacting particle systems have translation-invariant product stationary distributions. For those with product measures, it seems natural to choose the marginals $\nu^\vrho$ and $\nu^\la$ to be these stationary marginals. As two classical examples, the product of Geometric and Poisson distributions on $\Zbb^{\Zbb_0^+}$ are stationary for zero-range processes with constant and linear rate functions, respectively, to be discussed in Section \ref{sec:partmodels} in more details.
Notice, as the following Proposition \ref{prop:negativeresult} also demonstrates, that the additional requirement \eqref{eq:couplingstrict} of Proposition \ref{prop:probcouplingexist} might be too restrictive in some cases where $\nu^{\vrho,\,\la}$, hence $\mub^{\vrho,\,\la}$, might \emp{not} exist as a \emp{probability} measure.
\begin{proposition}\label{prop:negativeresult}
The family of \emp{Geometric} as well as \emp{Poisson distributions} can be parametrized to fulfill Assumption \ref{assump:marginalmeasure} but there do \emp{not} exist different densities $\vrho,\la$ for which \eqref{eq:couplingstrict} would hold for every $x\geq0$ simultaneously.
\end{proposition}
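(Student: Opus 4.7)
My plan is to first verify that both families, parametrized by their mean, fulfill Assumption \ref{assump:marginalmeasure}, and then, for every admissible pair with $\vrho>\la$ (the only regime in which \eqref{eq:couplingstrict} is non-vacuous by Proposition \ref{prop:probcouplingexist}), exhibit a single $y$ at which the inequality fails. It is convenient to rewrite \eqref{eq:couplingstrict} using the right-tail $\bar G^{\vrho}(k):\,=\nu^{\vrho}(\{z\geq k\})$ as
\[
\bar G^{\vrho}(y+1)\leq \bar G^{\la}(y)\qquad\text{for every }y\in\Nbb\cup\{0\},
\]
since $\nu^{\vrho}(\{z\leq y\})=1-\bar G^{\vrho}(y+1)$.

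For the Geometric family $\nu^{\vrho}(\{k\})=(1+\vrho)^{-1}\bigl(\vrho/(1+\vrho)\bigr)^{k}$ the mean is $\vrho$, and stochastic monotonicity in $\vrho$ is immediate from the explicit tail $\bar G^{\vrho}(k)=\bigl(\vrho/(1+\vrho)\bigr)^{k}$. Substituting, the restated condition reduces to
\[
\left(\tfrac{\vrho}{1+\vrho}\right)^{\!y+1}\leq\left(\tfrac{\la}{1+\la}\right)^{\!y}.
\]
Writing $a:\,=\vrho/(1+\vrho)$ and $b:\,=\la/(1+\la)$, one has $0<b<a<1$ whenever $\la<\vrho$, and the inequality becomes $a\,(a/b)^{y}\leq 1$. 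Since $a/b>1$, the left-hand side tends to $+\infty$ as $y\to\infty$, and the inequality is violated for all sufficiently large $y$.

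For the Poisson family $\nu^{\vrho}(\{k\})=e^{-\vrho}\vrho^{k}/k!$ the mean is again $\vrho$, and stochastic monotonicity follows from the standard coupling $X_{\vrho}=X_{\la}+Z$ with $Z$ an independent $\text{Poisson}(\vrho-\la)$. I would then compare the tails by keeping only the leading term on one side and summing a geometric majorant on the other: the crude bound
\[
\bar G^{\vrho}(y+1)\geq e^{-\vrho}\frac{\vrho^{y+1}}{(y+1)!},\qquad
\bar G^{\la}(y)\leq e^{-\la}\frac{\la^{y}}{y!}\cdot\frac{1}{1-\la/(y+1)}\quad(y+1>\la)
\]
gives, on division,
\[
\frac{\bar G^{\vrho}(y+1)}{\bar G^{\la}(y)}\geq e^{\la-\vrho}\cdot\frac{\vrho}{y+1}\left(\frac{\vrho}{\la}\right)^{\!y}\!\left(1-\frac{\la}{y+1}\right),
\]
whose right-hand side diverges with $y$ because the exponential factor $(\vrho/\la)^{y}$ dominates the $(y+1)^{-1}$ decay. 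Hence the tail inequality fails for all sufficiently large $y$, completing the proof.

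The only mildly technical point is the Poisson tail comparison, but the one-line geometric majorant above is enough; the remaining computations are essentially direct substitution, so I do not anticipate any serious obstacle.
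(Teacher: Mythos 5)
Your proposal is correct and follows essentially the same strategy as the paper: rewrite \eqref{eq:couplingstrict} as a comparison of distribution functions (equivalently tails) and show it must fail for all sufficiently large $y$ because the ratio of the two sides contains a factor $(\vrho/\la)^y$ (resp.\ $\bigl(\tfrac{\vrho(1+\la)}{\la(1+\vrho)}\bigr)^y$) that diverges. The only difference is technical and harmless: in the Poisson case the paper works with the incomplete-gamma integral representation of the cumulative distribution function (which simultaneously yields the stochastic monotonicity in Assumption \ref{assump:marginalmeasure}), whereas you use a first-term lower bound and a geometric majorant for the tails together with the superposition coupling for monotonicity.
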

Nevertheless, our main results (Section \ref{sec:mainr}) and our techniques do \emp{not} require the existence of the measure $\mub^{\vrho,\,\la}$, in particular that of $\nu^{\vrho,\,\la}$, and we do not need to assume \eqref{eq:couplingstrict}. In fact we do not necessarily need to start with stationary marginals.

\subsection{The distribution of the second class particle}\label{sec:scpdistr}
We spell out our fundamental result which will combine with hydrodynamics (to be explicated in Section \ref{sec:hydro}) to give the main Theorems \ref{thm:scphydroasym} and \ref{thm:scpdiffusivelim}. It connects the law of the displacement of a single second class particle with that of a (first class) particle occupation variable. Fix $\vrho>\la$, and recall the initial distributions \eqref{eq:initprodmeas} for a single model and \eqref{eq:scpstart} for a pair with the second class particle.
\begin{theorem}[Displacement distribution of the second class particle]\label{thm:scpdistr}
Suppose that a family of measures $\nu$ fulfills Assumption \ref{assump:marginalmeasure}.
Then for any $n\in\Zbb$ and $t\in\Rbb^+_0$ we have
\begin{equation}\label{eq:findispldistr}
\hat\Prob\{Q(t) \leq n\}=\frac{\vrho - \Exp_{\sigb^{\vrho,\,\la}}\om_{n+1}(t)}{\vrho - \la}.
\end{equation}
\end{theorem}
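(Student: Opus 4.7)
The plan is to realise explicitly the signed-coupling idea outlined in the introduction. Define a signed measure $\nu_{*}^{\vrho,\,\la}$ on $\Ic\times\Ic$ supported on $\{x=y\}\cup\{x=y+1\}$ by
\[
\nu_{*}^{\vrho,\,\la}(y+1,y) := \nu^{\la}(\{z\le y\})-\nu^{\vrho}(\{z\le y\}),\qquad
\nu_{*}^{\vrho,\,\la}(x,x) := \nu^{\vrho}(\{z\le x\})-\nu^{\la}(\{z\le x-1\}).
\]
By Assumption \ref{assump:marginalmeasure} the off-diagonal entries are non-negative, and a short check shows that $\nu_{*}^{\vrho,\,\la}$ has $\nu^\vrho$ and $\nu^\la$ as marginals; the diagonal entries may well be negative, hence the signed nature. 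Tensorising exactly as in \eqref{eq:scpstart} but with $\nu_{*}^{\vrho,\,\la}$ at site $0$ produces a signed coupling measure $\mub_{*}^{\vrho,\,\la}$ on $\Omega\times\Omega$ whose $\omb$-marginal is $\sigb^{\vrho,\,\la}$ and whose $\etab$-marginal is the product of $\nu^\vrho$'s at sites $i\le -1$ and of $\nu^\la$'s at sites $i\ge 0$. Reading off the weights at site $0$ and using $(\vrho-\la)\,\hat\nu^{\vrho,\,\la}=\nu_{*}^{\vrho,\,\la}\big|_{\{x=y+1\}}$ yields the key decomposition
\[
\mub_{*}^{\vrho,\,\la} = (\vrho-\la)\,\hat\mub^{\vrho,\,\la} + \mub_{\mathrm{d}}^{\vrho,\,\la},
\]
where $\mub_{\mathrm{d}}^{\vrho,\,\la}$ is a signed measure of total signed mass $1-(\vrho-\la)$ concentrated on the diagonal $\{\omb=\etab\}$.

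Next, I would run the basic coupling on $\mub_{*}^{\vrho,\,\la}$ by linearity. Diagonal initial pairs stay diagonal under basic coupling, so $\mub_{\mathrm{d}}^{\vrho,\,\la}$ contributes zero to the signed expectation $\Exp_{*}[\om_j(t)-\eta_j(t)]$, while the probability part $\hat\mub^{\vrho,\,\la}$ launches a unique second class particle $Q(\cdot)$ started at the origin, for which $\om_j(t)-\eta_j(t)=\ind\{Q(t)=j\}$. Hence
\[
\Exp_{*}[\om_j(t)-\eta_j(t)] = (\vrho-\la)\,\hat\Prob\{Q(t)=j\}.
\]
Evaluating the same signed integral through the two marginals, the translation invariance of the dynamics gives $\Exp_{*}[\om_j(t)-\eta_j(t)] = \Exp_{\sigb^{\vrho,\,\la}}\om_j(t) - \Exp_{\sigb^{\vrho,\,\la}}\om_{j+1}(t)$. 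Summing the resulting identity over $j\le n$ telescopes to
\[
(\vrho-\la)\,\hat\Prob\{Q(t)\le n\} = \lim_{j\to-\infty}\Exp_{\sigb^{\vrho,\,\la}}\om_j(t) - \Exp_{\sigb^{\vrho,\,\la}}\om_{n+1}(t).
\]
The remaining limit equals $\vrho$: basic-coupling $\sigb^{\vrho,\,\la}$ with $\sigb^{\vrho,\,\vrho}$, attractiveness together with the finite propagation of discrepancies from the bounded interface neighbourhood force agreement at site $j$ with probability tending to $1$ as $j\to-\infty$, while the translation-invariant $\sigb^{\vrho,\,\vrho}$-evolution has constant mean $\vrho$ by a direct generator computation from \eqref{eq:infgen}. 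Rearranging gives \eqref{eq:findispldistr}.

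The main obstacle is making the signed-measure dynamics rigorous: one must verify that the basic-coupling semigroup, extended by linearity to signed measures of bounded total variation, preserves both the diagonal-stays-diagonal property and the marginal evolution. For attractive processes with bounded rates this follows from a standard Hille--Yosida argument once one observes that $\mub_{*}^{\vrho,\,\la}$, restricted to any cylinder event, has finite total variation. For models with unbounded rates or unbounded occupations -- the misanthrope setting of Theorem \ref{thm:scphydroasym} -- one approximates by finite-volume or truncated-rate versions and passes to the limit, with attractive domination by the constant-density processes $\sigb^{\vrho,\,\vrho}$ and $\sigb^{\la,\,\la}$ controlling the tails uniformly in the approximation parameter. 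The interchange of the $j$-sum with the expectations is a minor further point, justified by absolute convergence: $\sum_{j\le n}\hat\Prob\{Q(t)=j\}\le 1$, and the marginal telescope is controlled by the same attractive-domination estimates that produce the $j\to-\infty$ limit above.
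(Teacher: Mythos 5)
Your construction of the signed site-zero marginal and its decomposition into $(\vrho-\la)\,\hat\mub^{\vrho,\,\la}$ plus a signed diagonal remainder is exactly the paper's key idea (the measure you call $\nu_{*}^{\vrho,\,\la}$ is the paper's $\bar\nu^{\vrho,\,\la}$, and the cancellation of the diagonal part under basic coupling is the same), so the first half of your argument coincides with the published proof; note that the ``signed-semigroup'' machinery you worry about is not actually needed, since the whole decomposition can be read as a finite-total-variation signed combination of expectations under genuine probability couplings, conditioned on the site-$0$ values, which is precisely how the paper phrases it. Where you genuinely diverge is in the evaluation of the marginal side: the paper compares the currents (height functions) of the $\sigb^{\vrho,\,\la}$- and $\tau_1\sigb^{\vrho,\,\la}$-started processes and then uses the exact pathwise shift coupling $\eta_i(s):=\om_{i+1}(s)$, which yields $\Exp_{\sigb^{\vrho,\,\la}}h_k(t)-\Exp_{\tau_1\sigb^{\vrho,\,\la}}g_k(t)=\Exp_{\sigb^{\vrho,\,\la}}\om_{n+1}(t)-\la$ identically, with no infinite telescoping and no boundary term; you instead telescope the single-site differences $\Exp\om_j(t)-\Exp\om_{j+1}(t)$ over $j\le n$ and must then prove $\lim_{j\to-\infty}\Exp_{\sigb^{\vrho,\,\la}}\om_j(t)=\vrho$. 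That extra boundary step is where your route pays: it requires both a finite-speed-of-propagation comparison with the homogeneous $\sigb^{\vrho,\,\vrho}$ system and the conservation of the mean under a translation-invariant but \emph{non-stationary} evolution (your ``direct generator computation''), both of which are routine for bounded rates but need the approximation arguments you only sketch for unbounded-rate models (bricklayers, zero-range with unbounded $f$), and which also quietly use locality of the dynamics, whereas the paper's shift-coupling identity is purely algebraic and, as the authors remark, extends verbatim to long-range or non-finite-range rates. In short: your proof is correct and arguably more transparent at the level of occupation variables, but the paper's height-function trick buys exactness of the second computation and hence the full generality of the statement without any propagation or mean-conservation estimates.
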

Note that $\nu$ does not have to be related to the stationary distributions of $\omb$ or of $(\hat{\omb},\hat{\etab})$ in any way. Also observe that Theorem \ref{thm:scpdistr} holds regardless of whether the family of measures $\nu$ satisfy the property detailed in Proposition \ref{prop:probcouplingexist} above.

Furthermore, notice that we had no further assumptions on the rates $p$ and $q$, hence \emp{both} asymmetric and symmetric processes are included in the above assertion. Indeed, a careful overview of our technique (see the proof of Theorem \ref{thm:scpdistr}) reveals that \eqref{eq:findispldistr} also holds for those models with long range jumps or with (non-)finite range dependent rates.

A rather classical result immediately follows from Theorem \ref{thm:scpdistr}, namely the quantity $\Exp_{\sigb^{\vrho,\,\la}}\om_n(t)$ has uniform lower and upper bounds $\la$ and $\vrho$, respectively, in the space $(n,t)\in\Zbb\times\Rbb_0^+$. Also observe that for each fixed $t\in\Rbb_0^+$, the function $n\mapsto\Exp_{\sigb^{\vrho,\,\la}}\om_{n}(t)$ is \emp{monotone non-increasing} in $n\in\Zbb$.

\subsection{The site of the second class particle}\label{sec:bg}
Simple exclusion is special in many ways. One of its simplifying feature is due to $\omax=\omin+1$: there is no choice for the configuration at the site of the second class particle. We deterministically have $\om_{Q(t)}(t)=1$ and $\eta_{Q(t)}(t)=0$ for all $t\geq0$. There are more options when $\omax>\omin+1$, and the next theorem gives an interesting result on the site $Q(t)$ of the second class particle in such models. We take any function $\vphi:\Ic\to\Rbb$ for which either condition $\vphi\geq 0$, or $\sum_{y\in\Ic}\abs{\vphi(y)}<+\infty$ holds. Then we define $\Phi(x) = \sum_{y=\omin}^x \vphi(y)$ and further assume $\Exp_{\sigb^{\vrho,\,\vrho}}\Phi(\om_0(t))<+\infty$.
\begin{theorem}[Background as seen from the position of the second class particle]\label{thm:scpbackground}
Suppose that a family of measures $\nu$ fulfills Assumption \ref{assump:marginalmeasure}. Then we have the identity:
\begin{equation}\label{eq:scpbackgroundformula}
\hat\Exp\,\vphi\big(\hat\omb_{Q(t)}(t)\big) = \frac{\Exp_{\sigb^{\vrho,\,\vrho}}\Phi(\om_{0}(t))-\Exp_{\sigb^{\la,\,\la}}\Phi(\om_{0}(t))}{\vrho-\la}.
\end{equation}
\end{theorem}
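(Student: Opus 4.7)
The proof is modeled on the signed-coupling strategy that underlies Theorem~\ref{thm:scpdistr}. Starting from $\hat\mub^{\vrho,\,\la}$ the pair $(\hat\omb(0),\hat\etab(0))$ contains exactly one positive second class particle at the origin, and by attractiveness together with the conservation law of the basic coupling, $\hat\omb(t) = \hat\etab(t) + \delta_{Q(t)}$ a.s.\ for every $t\geq 0$. Consequently, for every $n\in\Zbb$,
\[
\Phi(\hat\om_n(t)) - \Phi(\hat\eta_n(t)) = \vphi(\hat\om_n(t))\cdot\ind\{Q(t)=n\},
\]
since off $Q(t)$ the two terms agree, and on $\{Q(t)=n\}$ we have $\hat\om_n(t) = \hat\eta_n(t)+1$, so $\Phi(\hat\eta_n(t)+1) - \Phi(\hat\eta_n(t)) = \vphi(\hat\om_n(t))$. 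Summing over $n$ yields the pointwise identity
\[
\vphi(\hat\omb_{Q(t)}(t)) = \sum_{n\in\Zbb}\big[\Phi(\hat\om_n(t))-\Phi(\hat\eta_n(t))\big]\qquad\hat\Prob\text{-a.s.}
\]

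The next step is to transfer this identity to the signed unconditional coupling measure $\mub^{\vrho,\,\la}$ from Section~\ref{sec:thed}. The basic coupling preserves the decomposition $\mub^{\vrho,\,\la} = \mub^{\mathrm{diag}} + (\vrho-\la)\hat\mub^{\vrho,\,\la}$, whose signed component $\mub^{\mathrm{diag}}$ is supported on the invariant diagonal $\{\omb=\etab\}$; on that component the inner sum vanishes identically, so taking expectations leaves
\[
(\vrho-\la)\,\hat\Exp\,\vphi(\hat\omb_{Q(t)}(t)) = \Exp_{\mub^{\vrho,\,\la}}\sum_{n\in\Zbb}\big[\Phi(\om_n(t))-\Phi(\eta_n(t))\big].
\]
By construction the $\omb$-marginal of $\mub^{\vrho,\,\la}$ equals $\sigb^{\vrho,\,\la}$, while its $\etab$-marginal is the one-site shift of $\sigb^{\vrho,\,\la}$ (density $\vrho$ for $i\leq -1$, $\la$ for $i\geq 0$); translation invariance of the dynamics then yields $\Exp_{\mub^{\vrho,\,\la}}\Phi(\eta_n(t)) = \Exp_{\sigb^{\vrho,\,\la}}\Phi(\om_{n+1}(t))$, and the right-hand sum telescopes to
\[
\lim_{N\to\infty}\Big(\Exp_{\sigb^{\vrho,\,\la}}\Phi(\om_{-N}(t)) - \Exp_{\sigb^{\vrho,\,\la}}\Phi(\om_{N+1}(t))\Big).
\]

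The remaining, and I expect main technical, step is to identify these two one-sided limits with $\Exp_{\sigb^{\vrho,\,\vrho}}\Phi(\om_0(t))$ and $\Exp_{\sigb^{\la,\,\la}}\Phi(\om_0(t))$ respectively. My approach would be to couple $\sigb^{\vrho,\,\la}$ with $\sigb^{\vrho,\,\vrho}$ (made to agree on $i\leq 0$) and separately with $\sigb^{\la,\,\la}$ (made to agree on $i\geq 1$) under the basic coupling; the discrepancies are then carried by (possibly signed) second class particles rooted in the perturbed half-line, whose probability of reaching the observation site $\mp N$ by time $t$ vanishes as $N\to\infty$. The hypothesis $\Exp_{\sigb^{\vrho,\,\vrho}}\Phi(\om_0(t))<+\infty$, combined with the two admissible structural choices for $\vphi$ (bounded $\Phi$ in the absolutely summable case, non-decreasing $\Phi$ together with stochastic monotonicity of the coupled marginals in the $\vphi\geq 0$ case), upgrades this distributional convergence to convergence of the $\Phi$-expectations. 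Dividing through by $\vrho-\la$ then produces \eqref{eq:scpbackgroundformula}.
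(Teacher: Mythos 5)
Your proposal is correct and follows essentially the same route as the paper's proof: the pointwise identity $\Phi(\hat\om_n(t))-\Phi(\hat\eta_n(t))=\ind\{Q(t)=n\}\cdot\vphi(\hat\om_n(t))$ under the signed coupling (the paper implements your decomposition of the signed product measure by conditioning on the origin value with the signed marginal $\bar\nu^{\vrho,\,\la}$, whose diagonal part contributes zero), the telescoping over $n$, and identification of the boundary limits $\Exp_{\sigb^{\vrho,\,\la}}\Phi(\om_{\mp N}(t))\to\Exp_{\sigb^{\vrho,\,\vrho}}\Phi(\om_{0}(t))$ and $\Exp_{\sigb^{\la,\,\la}}\Phi(\om_{0}(t))$ via finite speed of propagation. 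The paper carries out the steps you sketch exactly as you anticipate: it works with partial sums over $\abs{n}\leq L$ (so the telescoping is over a finite range), truncates $\Phi$ at a level $M$, and uses attractiveness together with monotone respectively dominated convergence in your two cases for $\vphi$.
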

In plain words, this theorem tells that the law of $\hat\om_{Q(t)}(t)$ for a $t\geq0$, i.e.\ the particle occupation number at the position of the second class particle, can fully be captured by that of $\om_0(t)$ of $\omb$ starting from $\sigb^{\vrho,\,\vrho}$ and then $\sigb^{\la,\,\la}$.
In particular, if $\sigb^{\vrho,\,\vrho}$ and $\sigb^{\la,\,\la}$ are stationary distributions for the dynamics \eqref{eq:infgen} then the background marginal one-site process $(\hat\om_{Q(t)}(t),\hat\eta_{Q(t)}(t))_{t\geq0}$, as seen from the position of the lone second class particle, is stationary. This can be thought of as another fact proving the intrinsicality of the marginal $\hat\nu^{\vrho,\,\la}$. Notice though that Theorem \ref{thm:scpbackground} does not say anything about the distribution of any site other than that of the second class particle, those are in general \emp{not} stationary. A few very special cases of joint stationary distributions seen by the second class particle are described in \cite{dls,valak,rwshscp} and references therein.

\section{The misanthrope family}\label{sec:misa}

In this section we briefly discuss a special class of attractive particle systems called the misanthrope family where our main results naturally apply. We again underline that there is a much larger class of processes (and initial measures) that we also cover.

First, define the \emp{Gibbs measures} as
\begin{equation}\label{eq:gibbs}
\Gamma^{\theta}(x) :\,= \frac{1}{Z(\theta)}\cdot\exp\big(\,\theta\cdot x + E(x)\, \big) \qquad (x\in\Ic),
\end{equation}
where $\theta\in\Rbb$ is a generic real parameter, which is often referred to as the \emp{chemical potential};  $E:\Ic\to\Rbb$ is any function with appropriate asymptotic growth; finally, the \emp{statistical}- or \emp{partition sum} is $Z(\theta) = \sum_{y\in\Ic}\exp\big(\,\theta \cdot y + E(y)\big)$.

It is known that the above defined Gibbs measures satisfy Assumption \ref{assump:marginalmeasure} (see \cite[Appendix A]{exists} and also \cite{convex}). For the sake of completeness we restate this result below.
\begin{proposition}
Assume that $\Gamma:\,=\big(\Gamma^{\theta}\big)_{\theta\in\Dc_c}$ forms a bunch of probability measures with finite variance, where $\Dc_c$ is some open set of the reals. Then $\Gamma$ satisfies Assumption \ref{assump:marginalmeasure}. In particular, there is a bijection between the parameters $\theta\in\Dc_c$ and the densities $\vrho = \vrho(\theta)\in\Rbb$; and for $\theta(\la)<\theta(\vrho)$, or equivalently for $\la<\vrho$, the measure $\Gamma^{\theta(\vrho)}$ stochastically dominates $\Gamma^{\theta(\la)}$.
\end{proposition}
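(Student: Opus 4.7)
The plan is to verify the two bullet points of Assumption \ref{assump:marginalmeasure} by exploiting two standard features of the exponential form \eqref{eq:gibbs}: that the mean $\vrho(\theta)$ is a smooth, strictly increasing function of the natural parameter $\theta$, and that the family has a monotone likelihood ratio in $x$. The former yields parametrization by density together with the claimed bijection; the latter delivers the stochastic ordering. Neither ingredient is deep, so the work is mostly in assembling them and being careful about the differentiation.

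For the bijection, I would differentiate $\log Z(\theta)$ to obtain $(\log Z)'(\theta) = \vrho(\theta)$ and $(\log Z)''(\theta) = \Var_{\Gamma^\theta}(\om_0)$, interchanging sum and derivative via the finite-variance hypothesis. Since $\Gamma^\theta$ assigns strictly positive mass to every point of $\Ic$ by \eqref{eq:gibbs}, its variance is strictly positive (assuming, harmlessly, that $\Ic$ contains at least two points). Hence $\vrho(\cdot)$ is a $C^\infty$ strictly increasing bijection from $\Dc_c$ onto $\Dc:\,=\vrho(\Dc_c)\subset [\omin,\omax]$, so one can write $\theta=\theta(\vrho)$ unambiguously, establishing the first bullet of Assumption \ref{assump:marginalmeasure}.

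For the stochastic domination, fix $\la<\vrho$ in $\Dc$, equivalently $\theta(\la)<\theta(\vrho)$. The pointwise ratio
\[
\frac{\Gamma^{\theta(\vrho)}(x)}{\Gamma^{\theta(\la)}(x)} = \frac{Z(\theta(\la))}{Z(\theta(\vrho))}\exp\bigl((\theta(\vrho)-\theta(\la))\cdot x\bigr)
\]
is strictly increasing in $x\in\Ic$; since both measures have total mass $1$, the ratio must cross the value $1$ at some threshold $x_0\in\Ic$, below which $\Gamma^{\theta(\la)}$ exceeds $\Gamma^{\theta(\vrho)}$ pointwise and above which the reverse holds. The cumulative difference $\Gamma^{\theta(\la)}(\{z:z\leq y\})-\Gamma^{\theta(\vrho)}(\{z:z\leq y\})$ is then non-negative for $y<x_0$ by direct inspection. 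For $y\geq x_0$ I would rewrite it, using mass conservation, as $\Gamma^{\theta(\vrho)}(\{z:z>y\})-\Gamma^{\theta(\la)}(\{z:z>y\})$ and reapply the pointwise inequality on the right tail, again obtaining a non-negative quantity. This is exactly the stochastic domination asked for in the second bullet.

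The only genuine technical obstacle is the termwise differentiation of the series defining $\vrho(\theta)$ when $\Ic$ is infinite; the finite-variance hypothesis combined with the exponential decay inherent in \eqref{eq:gibbs} on compact subsets of $\Dc_c$ supplies the dominating function needed to apply dominated convergence. Once that is in place, the remainder is pure algebra on the exponential density \eqref{eq:gibbs}.
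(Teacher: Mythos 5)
Your argument is correct, and it is the standard one: the paper itself does not prove this proposition but defers to the cited references (Appendix A of \cite{exists} and \cite{convex}), where essentially the same two ingredients appear — $(\log Z)'(\theta)=\vrho(\theta)$, $(\log Z)''(\theta)=\Var_{\Gamma^\theta}(\om_0)>0$ giving the strictly increasing bijection $\theta\mapsto\vrho(\theta)$, and the monotone likelihood ratio with a single-crossing argument giving the stochastic ordering of the cumulative functions. The only points worth tightening are cosmetic: at the crossing value the two weights may coincide, so phrase the threshold as "ratio $\leq 1$ below, $\geq 1$ above" before summing the head and the tail; and note that openness of $\Dc_c$ (so that $Z$ is finite on a neighborhood of each $\theta$) is what licenses the termwise differentiation, with the finite-variance hypothesis guaranteeing the second derivative is the variance you need.
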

Due to the bijection claimed in the previous assertion we will change freely between the representations of the measure \eqref{eq:gibbs} either by the chemical potential $\theta=\theta(\vrho)$ or by the density $\vrho=\vrho(\theta)$.

We emphasize that $\Gamma^{\vrho}$ is \emp{not} necessarily a stationary marginal of the dynamics \eqref{eq:infgen} in general. Following ideas of Cocozza-Thivent \cite{coco}, for attractive systems, where $\Gamma^{\vrho}$ is indeed stationary, a nice characterization theorem was established by M.\ Bal\'azs et al., which we recall in the following.
\begin{theorem}[M.\ Bal\'azs et al.]\label{thm:prodstatergod}
Let
\begin{equation*}
E(x) = \sum_{y = x+1}^0\log(f(y))\, -\, \sum_{z=1}^{x}\; \log(f(z))\qquad (x\in\Ic),
\end{equation*}
where $f:\Zbb\to\Rbb^+$ is such that $f(x)=1$ whenever $x\in\Zbb\backslash \Ic$, and is monotone non-decreasing on $\Ic\backslash\{\omin\}$. (The empty sum is as usual defined to be zero.)
Suppose furthermore that:
\begin{itemize}
\item there are \emp{symmetric functions} $s_p,s_q:\Ic\times\Ic\to\Rbb^+_0$ such that
\begin{equation}\label{eq:specrates}
\begin{aligned}
p(\om_i,\om_{i+1}) &= s_p(\om_i,\,\om_{i+1}+1)\cdot f(\om_i)\quad (\omb\in\Omega);\\
q(\om_i,\om_{i+1}) &= s_q(\om_i+1,\,\om_{i+1})\cdot f(\om_{i+1})\quad (\omb\in\Omega),
\end{aligned}
\end{equation}
where $s_p(\omin,\,\cdot\,)\equiv s_p(\,\cdot\,,\omax)\equiv s_q(\,\cdot\,,\omin)\equiv s_q(\omax,\cdot\,)\equiv 0$ holds whenever $\omin$ or $\omax$ is finite, otherwise they are non-zero except when $p$ or $q$ is set to be zero (totally asymmetric case);
\item for any $\omb\in\Omega$ and $i\in\Zbb$:
\begin{multline}\label{eq:combiassumponrates}
p(\om_i,\om_{i+1}) + p(\om_{i+1},\om_{i+2}) + p(\om_{i+2},\om_i)+q(\om_i,\om_{i+1}) + q(\om_{i+1},\om_{i+2}) + q(\om_{i+2},\om_i)\\
=p(\om_i,\om_{i+2}) + p(\om_{i+2},\om_{i+1}) + p(\om_{i+1},\om_i)+q(\om_i,\om_{i+2}) + q(\om_{i+2},\om_{i+1}) + q(\om_{i+1},\om_i).
\end{multline}
\end{itemize}
Then the density parametrized product measure $\Gmb^{\vrho}:\,=\bigotimes_{i=-\infty}^{+\infty}\Gamma^{\vrho}$ is \emp{extremal} among the trans\-la\-tion-invariant stationary distributions of the process with rates $p,\,q$ and infinitesimal generator $\Gc$ of \eqref{eq:infgen}.
\end{theorem}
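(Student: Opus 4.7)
The plan is to establish two separate statements: (I) $\Gmb^{\vrho}$ is a stationary distribution for the dynamics generated by $\Gc$ of \eqref{eq:infgen}, and (II) $\Gmb^{\vrho}$ is an extremal point in the convex set of translation-invariant stationary measures. I would separate the algebraic stationarity calculation from the purely probabilistic extremality argument, because the two parts require quite different tools.

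For part (I), I would verify $\hat{\Exp}_{\Gmb^{\vrho}} \Gc \vphi = 0$ for every local cylinder function $\vphi$. The algebraic engine is the one-step recursion
\begin{equation*}
f(x+1)\,\Gamma^{\vrho}(x+1) = e^{\theta(\vrho)}\,\Gamma^{\vrho}(x),
\end{equation*}
which follows immediately from the formula for $E$ by direct case-check that $E(x+1)-E(x) = -\log f(x+1)$ for every $x\in\Ic$, together with the definition \eqref{eq:gibbs}. Combined with the factorization \eqref{eq:specrates}, this recursion allows a change of summation variable at the bond level: the $f(\om_i)$ factor inside $p(\om_i,\om_{i+1})$ is absorbed into the measure by a unit shift in $\om_i$, and similarly for the $f(\om_{i+1})$ factor inside $q$. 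The symmetry $s_p(u,v)=s_p(v,u)$ and $s_q(u,v)=s_q(v,u)$ then pairs the transformed $p$-terms with their $q$-counterparts. After this bookkeeping the per-bond integral does not vanish (generically there is a non-zero stationary current), but the residual contribution reduces to a purely three-site expression depending only on $\om_i,\om_{i+1},\om_{i+2}$.

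The main obstacle is precisely this three-site residue: one must show that it telescopes to zero when summed over all bonds. This is where the combinatorial identity \eqref{eq:combiassumponrates} enters in an essential way. Rewriting the residue as the difference of two cyclic sums over the triple $(\om_i,\om_{i+1},\om_{i+2})$ and its cyclic permutation, one recognizes the two sides of \eqref{eq:combiassumponrates}, so that the difference vanishes bond by bond. I expect careful indexing and sign tracking to be the most error-prone step; without the identity \eqref{eq:combiassumponrates} one would at best obtain stationarity for some restricted sub-family of measures (for instance, in the reversible case where $s_p$ and $s_q$ coincide and the three-site terms are trivially zero by detailed balance). The finite-variance hypothesis ensures that all rearrangements of sums are justified by Fubini.

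For part (II), I would invoke an ergodicity argument of Kolmogorov $0$--$1$ type. Because $\Gmb^{\vrho}$ is a product measure over $\Zbb$, the spatial shift is mixing on $(\Omega,\Gmb^{\vrho})$, hence ergodic. Now suppose $\Gmb^{\vrho}=\alpha\,\mu_1+(1-\alpha)\mu_2$ with $\alpha\in(0,1)$ and $\mu_1,\mu_2$ translation-invariant stationary distributions. Then $\mu_j\ll\Gmb^{\vrho}$ and the Radon--Nikodym derivatives $\mathrm{d}\mu_j/\mathrm{d}\Gmb^{\vrho}$ are translation-invariant measurable functions, so by ergodicity they are $\Gmb^{\vrho}$-a.s.\ constant; this forces $\mu_1=\mu_2=\Gmb^{\vrho}$ and hence extremality. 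This part is largely standard once (I) is in hand, so I expect it to be short compared with the bond calculation above.
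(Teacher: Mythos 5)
First, note that the paper does not actually reprove Theorem \ref{thm:prodstatergod}: it is quoted from earlier work, with the stationarity part referred to \cite{varj2nd} (in the tradition of Cocozza-Thivent \cite{coco}) and the extremality/ergodicity part only sketched in the remark following the theorem, by adapting Lemmas 7.2 and 7.3 of \cite{exists} (a second class particle--antiparticle pair on adjacent sites collides with positive probability; hence any $\Lc^2$ function invariant under the \emph{dynamics} is unchanged by adding $(+1,-1)$ to adjacent coordinates, is therefore exchangeable, and is constant by Hewitt--Savage). Your part (I) follows the same general strategy as the cited stationarity proofs, and your recursion $f(x+1)\,\Gamma^{\vrho}(x+1)=e^{\theta(\vrho)}\Gamma^{\vrho}(x)$ is correct; but the decisive step is missing. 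You assert that, after absorbing the $f$-factors into the measure and using the symmetry of $s_p,s_q$, the leftover per-bond contribution is ``a purely three-site expression'' which cancels ``bond by bond'' because one ``recognizes the two sides of \eqref{eq:combiassumponrates}.'' That cancellation is the entire content of the stationarity claim and is not a routine bookkeeping exercise; moreover, as described it cannot be a pointwise, per-bond identification, since \eqref{eq:combiassumponrates} contains the wrap-around terms $p(\om_{i+2},\om_i)$ and $q(\om_{i+2},\om_i)$, which correspond to no bond of the lattice -- in the actual proofs the identity is exploited \emph{in expectation}, using the exchangeability of the i.i.d.\ marginals of $\Gmb^{\vrho}$ over the three sites involved. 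In addition, for unbounded $\Ic$ and unbounded rates, finiteness of $\Exp_{\Gmb^{\vrho}}\abs{\Gc\vphi}$ and the passage from the formal identity $\Exp_{\Gmb^{\vrho}}\Gc\vphi=0$ to stationarity of the semigroup require more than ``Fubini by finite variance'' (this is precisely a delicate point treated in \cite{exists}). So part (I), as written, is a plan rather than a proof.

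Part (II) is logically valid for the statement as literally phrased, but it is a genuinely different and much weaker route than the one the paper points to. Your argument is: an i.i.d.\ product measure is mixing, hence ergodic, for the spatial shift, hence extremal in the set of all translation-invariant measures, and therefore a fortiori extremal in the smaller convex set of translation-invariant \emph{stationary} measures; the Radon--Nikodym step implementing this is correct (given part (I), which supplies membership in that set). The paper's route, by contrast, establishes the stronger dynamical statement that every $\Lc^2$ function invariant under the process is $\Gmb^{\vrho}$-a.s.\ constant, via the second class particle collision argument and Hewitt--Savage; this is what the cited references actually prove and arguably what ``extremal'' is standing in for in this context. Your shift-ergodicity argument cannot yield that, because it never interacts with the dynamics. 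In summary: part (II) is fine for the literal claim, and the substantive gap is the unverified stationarity computation in part (I).
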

\begin{remark}
The conditions of Theorem \ref{thm:prodstatergod} originate a wide range of attractive models which we call the \emp{misanthrope family} of processes throughout the article. We will discuss some in Section \ref{sec:partmodels}.
\end{remark}
\begin{remark}
We underline that neither \eqref{eq:specrates} nor \eqref{eq:combiassumponrates} is a requirement for any of our results in Section \ref{sec:furth}.
\end{remark}
\begin{remark}
The stationarity part of Theorem \ref{thm:prodstatergod} has been carried out thoroughly in \cite{varj2nd}, in which all the extremal translation-invariant stationary distributions were covered by examining the convergence region of the partition sum $Z$. For the ergodicity we will briefly comment on how Lemmas 7.2 and 7.3 of \cite{exists} established for the bricklayers' process can be modified to be handy for any process. First, it is not hard to see that Lemma 7.2 can be extended to the cases when (in any order) a positive and a negative second class particle start from next to each other. This results in that the probability of them colliding before any given time is (strictly) positive. Here the only required property of the underlying process is the continuity of its semigroup. Then in Lemma 7.3 ergodicity is carried out by showing that any invariant $\Lc^2$ function $\psi$ w.r.t.\ $\Gmb^{\theta(\vrho)}$ is constant. Now, by using (the extended version of) Lemma 7.2 it can be easily pointed out that adding $(+1,-1)$ (or $(-1,+1)$) to adjacent occupation numbers, whenever this change keeps the state space, does not modify the value of an invariant $\psi$. It follows that interchanging any two adjacent sites does not change the value of $\psi$ under $\Gmb^{\theta(\vrho)}$. The argument is then completed by the application of the Hewitt--Savage $0$-$1$ law.
\end{remark}
Finally, in the above particular case \eqref{eq:gibbs}, consider the measure $\hat\nu^{\vrho,\,\la}$ of \eqref{eq:scpcouplatorig} that is:
\begin{align*}
\hat\nu^{\vrho,\,\la}(x,y)
&=\frac{1}{\vrho-\la}\sum_{z=\omin}^y\big(\Gamma^{\theta(\la)}(z)-\Gamma^{\theta(\vrho)}(z)\big)\cdot\ind\{x=y+1\}\\
&=\frac{\theta(\vrho)-\theta(\la)}{\vrho-\la}
\cdot\sum_{z=y+1}^{\omax}\frac{\Gamma^{\theta(\vrho)}(z)-\Gamma^{\theta(\la)}(z)}{\theta(\vrho)-\theta(\la)}
\cdot\ind\{x=y+1\}
\end{align*}
for $x,y\in\Ic$. Now, fixing $\vrho$ and taking the limit as $\la\uparrow\vrho$ we obtain
\[
(\hat\nu^{\vrho})'(x,y) :\,= \theta'(\vrho)\cdot\sum_{z=y+1}^{\omax} (z-\vrho)\cdot\Gamma^{\theta(\vrho)}(z)\cdot\ind\{x=y+1\}\qquad(x,y\in\Ic),
\]
where it is easy to see that $\theta'(\vrho)=\frac{1}{\Var(\om_0)}$ for $\om_0$ distributed as $\Gamma^{\theta(\vrho)}$. (The empty sum is defined to be zero.) Observe that this probability measure $(\hat\nu^\vrho)'$ is just the marginal at the origin of the initial distribution that was used in \cite[Theorem 2.2]{varj2nd} to start a single second class particle from that position. Thus our treatment is in correspondence with results from \cite{varj2nd}. As a side remark we mention without details that via a second order Taylor expansion as $\la\uparrow\vrho$ one can formally recover the covariance formula in \cite[Theorem 2.2]{varj2nd} directly from \eqref{eq:findispldistr}. Bounding the error terms that arise is straightforward when $\abs{\Ic}<+\infty$, making this argument rigorous.

\section{Hydrodynamics}\label{sec:hydro}

This section is devoted to briefly recall the main notions and results from hydrodynamics of asymmetric and symmetric particle systems as well. Some of the results below use the misanthrope class (see the previous Section \ref{sec:misa}) while others are more general.

\subsection{Hydrodynamics of asymmetric models}\label{sec:asymhydro}
The idea behind the hydrodynamic limit for asymmetric systems is that, in \emph{hyperbolic} scaling (i.e.\ same scale for space and time), the rescaled microscopic average density of interacting particles behaves as a deterministic density field obeying the conservation law
\begin{equation}\label{eq:hydrogeneral}
\left.
\begin{aligned}
\partial_t u + \partial_x G(u)&= 0\\
u(\,\cdot\,,0) &= v(\cdot)
\end{aligned}
\right\}
\end{equation}
where $u=u(x,t)$ is the \emp{(macroscopic) density} with initial condition $v(\cdot)$. The function $G$ is called the \emp{hydrodynamic flux} and is
\begin{equation}\label{eq:hydrofluxdef}
G(\vrho)=\Exp_{\pib^{\vrho}}\big[p(\om_0,\,\om_{1})-q(\om_0,\,\om_{1})\big].
\end{equation}
Here $\Exp_{\pib^{\vrho}}$ denotes the expectation w.r.t.\ the extremal stationary distribution $\pib^{\vrho}$ for a density $\vrho$.

The \emp{rescaled empirical measure} of a sequence of random configurations $(\omb^N)_{N\in\Nbb}$ is defined as
\begin{equation*}
\alpha^N\big(\omb^N,\di x\big)=\frac{1}{N}\sum_{j\in\Zbb}\om_j^N\ind\big\{j/N\in\di x\big\}\qquad(N\in\Nbb).
\end{equation*}
A deterministic bounded Borel measurable function $v$ on $\Rbb$ is the \emp{density profile} of $(\omb^N)_{N\in\Nbb}$, if $\alpha^N(\omb^N,\di x)$ converges to $v(x)\di x$ as $N\to+\infty$ for all $x\in\Rbb$, in probability as a random object, and in the topology of vague convergence as a measure, meaning that
\begin{equation*}
\lim_{N\to+\infty}\Prob^N
\bigg(\bigg|\frac{1}{N}\sum_{j\in\Zbb}\psi\big(j/N\big)\cdot\om_j^N-
\int_{x\in\Rbb}\psi(x)\cdot v(x) \di x\bigg| > \veps\bigg)=0
\end{equation*}
is required to hold for each $\veps>0$ and with all continuous test function $\psi:\Rbb\to\Rbb$ of compact support.
\begin{definition}[Hydrodynamic limit]\label{def:hydrolimit}
A sequence of processes $(\omb^N(t))_{t\geq 0}$ ($N\in\Nbb$), all generated by $\Gc$ of \eqref{eq:infgen} with random initial configurations $\omb_0^N$ ($N\in\Nbb$) \emp{exhibits a hydrodynamic limit} $u$, if $(\omb^N(N t))_{N\in\Nbb}$ has density profile $u(\,\cdot\,,t)$ for every $t\geq 0$, where $u$ is a (weak) solution to the problem \eqref{eq:hydrogeneral}.
\end{definition}
We note that the hydrodynamic limit just defined is also referred to as the \emp{weak conservation of local equilibrium} (cf.\ \cite[Chapter 4]{cl}). Finally, we introduce one more notation: for a fixed $n\in\Zbb$ denote by $\tau_n$ the \emp{shift operator} which acts on a configuration $\omb\in\Omega$ as $(\tau_n\omb)(i) = \om_{i+n}$ $(i\in\Zbb)$ and on a measure $\kpb:\Omega\to[0,1]$ as $\tau_n\kpb(\omb) = \kpb(\tau_n\omb)$, respectively.

In the following, we will make the choice $\sigb^{\vrho,\,\la}$ of \eqref{eq:initprodmeas} as a common initial distribution for the sequence of processes to be rescaled in the hydrodynamic limit. It follows that the limiting process has the \emp{Riemannian (step) initial density profile}
\begin{equation}\label{eq:stepinitcond}
v(x)=\left\{
\begin{aligned}
&\vrho, && \text{if } x\leq 0, \\
&\la,   && \text{if } x>0.
\end{aligned}
\right.
\end{equation}
Under Assumption \ref{assump:marginalmeasure} and mild assumptions on the flux function $G$ there exists a \emp{unique entropy solution} $u$ to the problem \eqref{eq:hydrogeneral} with \eqref{eq:stepinitcond} as initial condition. It is also known that for each $t\geq0$, this weak solution is continuous apart from a finite set of jump discontinuities (shocks), where we define $u(\,\cdot\,,t)$ to be left-continuous. For concepts and results in hyperbolic conservation laws, which were omitted here, we refer to \cite{bagurasa} and further references therein (see also \cite{holdenrisebro}).

In what follows some exact results on hydrodynamics will be collected concerning the above setting. The first general result is from \cite{hl}, valid in the misanthrope framework of Section \ref{sec:misa} (see Theorem \ref{thm:prodstatergod}).
\begin{theorem}[F.\ Rezakhanlou]\label{thm:rezakhydro}
Take any process from the misanthrope family equipped with \emp{bounded} rates. Set the initial measure $\sigb^{\vrho,\,\la}$ to be of \emp{stationary} marginals. Then $(\omb^N(t))_{t\geq 0, N\in\Nbb}$ exhibits a hydrodynamic limit $u$, where $u$ is the unique entropy solution to \eqref{eq:hydrogeneral} with hydrodynamic flux $G$ \eqref{eq:hydrofluxdef} and with initial datum \eqref{eq:stepinitcond}. In addition, the limit
\begin{equation}\label{eq:rezakexpvlimit}
\lim_{N\to+\infty}\Exp_{\sigb^{\vrho,\,\la}}\bigg[\frac{1}{N}\sum_{j\in\Zbb}\psi(j/N)\cdot \vphi(\tau_j\omb(N t))\bigg] =
\int_{x\in\Rbb}\psi(x)\cdot \Exp_{\Gmb^{u(x,t)}}\vphi(\omb) \di x
\end{equation}
also holds for every continuous $\psi$ of compact support and any cylinder function $\vphi:\Omega\to\Rbb$.
\end{theorem}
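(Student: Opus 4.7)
The plan is to follow the coupling-and-entropy method introduced by F.\ Rezakhanlou in \cite{hl}, adapted to the misanthrope family of Section \ref{sec:misa}. The argument splits into three stages: (i) compactness of the empirical measures; (ii) identification of subsequential limits as Kru\v{z}kov entropy solutions of \eqref{eq:hydrogeneral}; and (iii) the sharpened local equilibrium statement \eqref{eq:rezakexpvlimit}.

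First I would establish tightness of the rescaled empirical measures $\alpha^N(\omb^N(Nt),\di x)$ viewed as processes with values in the space of nonnegative Radon measures on $\Rbb$. Boundedness of $p$ and $q$ together with Assumption \ref{assump:marginalmeasure}, which gives uniformly bounded expected occupations under $\sigb^{\vrho,\,\la}$, controls the flux across any compact region and yields tightness via an Aldous-type criterion. Any subsequential limit admits a measurable density $u(x,t)$ bounded above by $\max(\vrho,\la)$: attractiveness propagates the stochastic domination of $\sigb^{\vrho,\,\la}$ by $\Gmb^{\max(\vrho,\,\la)}$ in time and supplies the required uniform moment bounds.

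The core step identifies $u$ as the unique entropy solution of \eqref{eq:hydrogeneral} with step initial data \eqref{eq:stepinitcond}. Following Rezakhanlou, one couples the process to a second copy started from a translation-invariant product measure of a test density $c\in\Dc$, and tracks the basic-coupling discrepancy $\abs{\om_j-\eta_j}$, which is the microscopic analog of the Kru\v{z}kov entropy $\abs{u-c}$. Its evolution under the joint generator produces a microscopic current whose hydrodynamic limit is $\mathrm{sgn}(u-c)\cdot(G(u)-G(c))$; the sign structure of the basic coupling tables of Subsection \ref{sec:scp} gives an entropy dissipation at the microscopic level. Combining this with a one-block estimate yields the Kru\v{z}kov inequality
\begin{equation*}
\partial_t\abs{u-c}+\partial_x\big(\mathrm{sgn}(u-c)\cdot(G(u)-G(c))\big)\leq 0
\end{equation*}
in the distributional sense for every admissible $c$, after which Kru\v{z}kov's uniqueness theorem pins $u$ down as the announced entropy solution. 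Theorem \ref{thm:prodstatergod} is decisive here: since $\Gmb^{\theta}$ is the \emph{unique} translation-invariant stationary measure of density $\theta$ in the misanthrope class, the local averages produced by the one-block estimate can be identified with $\Gmb^{\theta}$-expectations. For \eqref{eq:rezakexpvlimit}, the same one-block estimate permits the replacement of $\vphi(\tau_j\omb(Nt))$ by $\Exp_{\Gmb^{\bar\om_{j,\,\ell}(Nt)}}\vphi$, where $\bar\om_{j,\,\ell}$ is the empirical density in a mesoscopic box of size $\ell$ around $j$; empirical density convergence and continuity of $\theta\mapsto\Exp_{\Gmb^{\theta}}\vphi$ in the bounded-rate setting then yield the advertised integral against $u(\,\cdot\,,t)$.

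The principal obstacle is the one-block (and auxiliary two-block) estimate, particularly when $\Ic$ is unbounded within the misanthrope family: bounded rates do not imply bounded occupations, so time-uniform moment and tail controls on $\om_j(Nt)$ are needed both to justify the cut-off replacements and to make the identification of the current $G$ rigorous on cylinder functions of possibly unbounded growth. The stochastic domination by $\Gmb^{\max(\vrho,\,\la)}$ afforded by attractiveness, together with the finite-variance hypothesis on the Gibbs marginals of Section \ref{sec:misa}, should suffice to close these gaps, but this is where the bulk of the technical work resides.
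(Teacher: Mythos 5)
This theorem is not proved in the paper at all: it is imported verbatim from Rezakhanlou \cite{hl}, so there is no internal argument of the authors to compare yours against. Measured against the cited work, your outline does reproduce the broad shape of Rezakhanlou's coupling-and-entropy strategy (tightness of the empirical measures, a microscopic Kru\v{z}kov inequality obtained by coupling with a constant-density stationary profile, a one-block estimate, Kru\v{z}kov uniqueness, and finally the local-equilibrium statement \eqref{eq:rezakexpvlimit} via block averaging). As a reconstruction of the route the citation points to, the skeleton is right.

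As a proof, however, it has genuine gaps that you yourself flag but do not close. First, the one-block (and two-block) estimates and the microscopic entropy-production bound are precisely the substance of Rezakhanlou's proof; deferring them to ``technical work'' leaves the core unproven, especially since \eqref{eq:rezakexpvlimit} is claimed for \emph{any} cylinder function $\vphi$, possibly unbounded when $\Ic$ is infinite, so the replacement step needs uniform integrability and tail controls beyond the second-moment remark you make. Second, your identification step leans on the assertion that $\Gmb^{\theta}$ is the \emph{unique} translation-invariant stationary measure of its density; Theorem \ref{thm:prodstatergod} only gives that $\Gmb^{\vrho}$ is \emph{extremal} among translation-invariant stationary distributions, and what the one-block argument actually requires is a characterization of (coupled-process) translation-invariant stationary measures concentrating on ordered pairs -- an extra input in \cite{hl} related to the ergodicity discussion in the remarks after Theorem \ref{thm:prodstatergod}, not a consequence of the statement you quote. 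Third, the tightness step is only gestured at (``Aldous-type criterion'') and the domination bound $u\leq\max(\vrho,\la)$ uses attractiveness starting from \emph{stationary} marginals, which is fine here but should be said, since the theorem's hypothesis that $\sigb^{\vrho,\,\la}$ has stationary marginals is exactly what makes these comparisons available. In short: correct map of the territory, but the mountains are not climbed.
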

In the above result we are much restricted for the marginals of the initial measure to be chosen properly. However, this was far more generalized by C.\ Bahadoran et.\ al.\ in \cite{bagurasa} for systems of bounded particle numbers per site. In particular, this result does not require the special algebraic structure of Section \ref{sec:misa}.
\begin{theorem}[C.\ Bahadoran, H.\ Guiol, K.\ Ravishankar and E.\ Saada]\label{thm:bagurasa}
Suppose that Assumption \ref{assump:marginalmeasure} holds and that both $\omin$ and $\omax$ are finite. Then $(\omb^N(t))_{t\geq 0}$ exhibits a hydrodynamic limit $u$ for every $\sigb^{\vrho,\,\la}$ of \eqref{eq:initprodmeas} with some Lipschitz continuous hydrodynamic flux $G$, where $u$ is the unique entropy solution to \eqref{eq:hydrogeneral} with initial datum \eqref{eq:stepinitcond}.
\end{theorem}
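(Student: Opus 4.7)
The plan is to follow the architecture of the Bahadoran--Guiol--Ravishankar--Saada program, which proves hydrodynamic limits for general attractive systems on a bounded single-site state space by combining three pieces: a characterization of the extremal translation-invariant stationary measures, a local-equilibrium principle for constant profiles, and a macroscopic stability estimate that together identify the microscopic empirical profile with the unique Kruzhkov entropy solution.

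First I would extract the family $\{\pib^{\vrho}\}_{\vrho\in[\omin,\omax]}$ of extremal translation-invariant stationary distributions. Because $\Ic$ is finite, the compactness of the simplex of invariant measures together with attractiveness (via Strassen-type coupling on the stochastic order) yields a stochastically monotone one-parameter family indexed by density. With this family in hand I would define the hydrodynamic flux $G(\vrho)=\Exp_{\pib^{\vrho}}[p(\om_0,\om_1)-q(\om_0,\om_1)]$; boundedness of $\Ic$ makes $p$ and $q$ bounded, and stochastic monotonicity of $\pib^{\vrho}$ then forces $G$ to be Lipschitz in $\vrho$, which is the regularity required to invoke Kruzhkov's theory later on.

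Next I would establish local equilibrium for constant profiles: under $\sigb^{\vrho,\vrho}$ the time-evolved law converges in a Cesàro sense to $\pib^{\vrho}$, yielding for any cylinder $\vphi$ and continuous test $\psi$ with compact support,
\begin{equation*}
\lim_{N\to\infty}\Exp_{\sigb^{\vrho,\vrho}}\Bigl[\frac{1}{N}\sum_{j\in\Zbb}\psi(j/N)\,\vphi(\tau_j\omb(Nt))\Bigr]=\int_{\Rbb}\psi(x)\,\Exp_{\pib^{\vrho}}\vphi(\omb)\,\di x.
\end{equation*}
This follows by sandwiching $\sigb^{\vrho,\vrho}$ stochastically between product measures whose evolutions converge to $\pib^{\vrho}$ and using ergodicity plus the uniform bound on $\om_i$. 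Paired with this I would prove macroscopic stability: for two attractively coupled processes started from step profiles with slightly different densities, the number of basic-coupling discrepancies in $[-MN,MN]$ remains $o(N)$ up to time $Nt$; since discrepancies (signed second class particles) can only be locally rearranged under attractive coupling, an $L^1$-type contraction combined with local equilibrium delivers the estimate.

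Finally I would approximate the Riemann datum \eqref{eq:stepinitcond} by a staircase of piecewise-constant density blocks, apply local equilibrium on each block, control interface errors via macroscopic stability, and pass to the limit. The resulting profile is a weak solution to $\partial_t u+\partial_x G(u)=0$, and the microscopic Kruzhkov entropy inequality (derived from attractive couplings with $\pib^c$ for each intermediate density $c\in[\la,\vrho]$) passes to the hydrodynamic scaling limit. Uniqueness of the entropy solution with Riemann datum then identifies the limit. The main obstacle is the macroscopic stability estimate: without the explicit product-form stationary measures or the combinatorial identities \eqref{eq:combiassumponrates} of the misanthrope family, one cannot rely on tractable coupling rates, and the estimate must be derived abstractly from attractiveness and finiteness of $\Ic$; handling the interface errors is the technically deepest part of the Bahadoran et al.\ program.
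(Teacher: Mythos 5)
This statement is not proved in the paper at all: it is an imported result, quoted from \cite{bagurasa}, so there is no internal argument to compare with; what follows measures your sketch against the actual Bahadoran--Guiol--Ravishankar--Saada proof.

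There is a genuine gap, and it sits exactly at the point that makes the cited theorem valuable. You assume from the outset a family $\{\pib^{\vrho}\}$ of extremal translation-invariant stationary measures indexed by \emph{every} density $\vrho\in[\omin,\omax]$, and you use this twice: once to get local equilibrium for constant profiles at arbitrary density, and once to derive a microscopic Kruzhkov entropy inequality by coupling with $\pib^{c}$ for each intermediate density $c\in[\la,\vrho]$. Under the hypotheses of the theorem (attractiveness and finite $\Ic$ only, no algebraic structure such as \eqref{eq:specrates}--\eqref{eq:combiassumponrates}), the invariant measures are only known to exist for densities in a closed subset $\Rc\subset[\omin,\omax]$ which may have gaps -- the paper itself points this out for the $K$-exclusion process, where it is unknown whether $\Rc$ spans $[0,K]$. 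For a density $\vrho$ in a gap of $\Rc$ there is no equilibrium to converge to, so your Ces\`aro local-equilibrium step for constant profiles breaks down, and the coupled-entropy-inequality route (which is essentially Rezakhanlou's method, the one the paper invokes separately as Theorem \ref{thm:rezakhydro} for misanthrope processes with explicit product equilibria) is unavailable. The actual BGRS argument is built precisely to avoid this: the flux $G$ is defined on $\Rc$ and extended by linear interpolation across the gaps, hydrodynamics is first proved for Riemann data by a constructive/variational argument based on a subadditive ergodic theorem, the limit is identified with the entropy solution through a macroscopic stability estimate, and general Cauchy data are then handled by a Glimm-type approximation scheme using finite propagation speed. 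Your ``macroscopic stability'' ingredient does appear there, but as part of this constructive scheme, not as a patch on an entropy-inequality proof. Secondary, smaller issues: Lipschitz continuity of $G$ does not follow from stochastic monotonicity alone but from a quantitative coupling bound using the bounded rates and the fact that the expected discrepancy density is $\vrho'-\vrho$; and the conclusion of the theorem is weak local equilibrium of the empirical profile, which is what the paper later upgrades at continuity points via Proposition \ref{prop:landimlocconv}, not a statement you get for free from convergence to equilibrium.
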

\begin{remark}
We refer to \cite{bagurasa} for the detailed definition of $G$ in the general case. Indeed, the previous assertion holds in even more general context as well as with sharper conclusions, for details consult \cite{bagurasa} and \cite{bagurasastrong}.
\end{remark}
\begin{remark}\label{rem:bagurasaunbounded}
Thanks to the step initial condition, by \cite[Remark 2., pp. 1347]{bagurasa}, we can extend Theorem \ref{thm:scphydroasym} for those \emp{unbounded systems} described in Theorem \ref{thm:prodstatergod} where the rates are bounded. We understand from informal communications that these results can further be generalized to models with unbounded rates as well.
\end{remark}

Our ultimate goal would be to conclude that the rescaled quantity $\Exp_{\sigb^{\vrho,\,\la}}\om_{[N x]}(N t)$ also converges, where $[\,\cdot\,]$ denotes the integer part function. This, however, does not appear to be an immediate consequence of the above theorems. But C.\ Landim \cite{landim} has elaborated a set of assumptions under which this consequence eventually holds (note also \cite[Proposition 0.6, Chapter 6]{cl}). We are going to recapitulate this result below to be formulated in our special context with sharper conclusions, outlining its proof in Section \ref{sec:proof}.
\begin{proposition}\label{prop:landimlocconv}
Suppose that the process with infinitesimal generator \eqref{eq:infgen} exhibits a density parametrized, stochastically ordered and \emp{continuous} family $(\pib^\vrho)_{\vrho\in \Rc}$ of trans\-la\-tion-invariant stationary distributions, where $\Rc\subset\Rbb$ is such that $\vrho_{\min}<\vrho_{\max}\in\Rc$. Fix a cylinder function $\vphi:\Omega\to\Rbb$, being either bounded or monotone non-decreasing, such that $\Exp_{\pib^{\vrho_{\max}}}\abs{\vphi}(\omb)<+\infty$. Assume furthermore that the convergence
\begin{equation}\label{eq:landimexpvlimit}
\lim_{N\to+\infty}\Exp_{\tau_{[N\veps]}\sigb^{\vrho,\,\la}}\bigg[\frac{1}{N}\sum_{j\in\Zbb}\psi(j/N)\cdot \vphi\big(\tau_j\omb^{\veps,N}(N t)\big)\bigg]
=\int_{x\in\Rbb}\psi(x)\cdot \Exp_{\pib^{u^{\veps}(x,t)}}\vphi(\omb) \di x
\end{equation}
takes place for every $\veps\in\Rbb$ and continuous $\psi:\Rbb\to\Rbb$ of compact support with some uniformly bounded  family of functions $(u^{\veps})_{\veps\in\Rbb}$ for which
$u^{\veps}:\Rbb\times\Rbb^{+}_0\to[\vrho_{\min},\vrho_{\max}]$ is monotone non-increasing for each fixed $t\in\Rbb^{+}_0$ and for every continuity point $x\in\Rbb$ of $u^0(\,\cdot\,,t)$: $\lim_{\veps\to 0}u^{\veps}(x,t)=u^{0}(x,t)$. Then we have
\begin{equation}\label{eq:stronghdl}
\lim_{N\to+\infty}\Exp_{\sigb^{\vrho,\,\la}}\vphi\big(\tau_{[N x]}\omb(N t)\big) = \Exp_{\pib^{u^0(x,t)}}\vphi(\omb)
\end{equation}
for every continuity point $x$ of $u^0(\,\cdot\,,t)$.
\end{proposition}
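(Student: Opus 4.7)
The plan is to reduce the proposition, for monotone non-decreasing cylinder $\vphi$, to a sandwich argument driven by attractiveness and the block-average convergence \eqref{eq:landimexpvlimit} with $\veps=0$, and to handle the bounded (not necessarily monotone) case afterwards via weak convergence of local marginals. First I would observe that stochastic monotonicity and continuity of $\vrho\mapsto\pib^\vrho$, together with $\Exp_{\pib^{\vrho_{\max}}}\abs\vphi<+\infty$, force $\Psi(\vrho):\,=\Exp_{\pib^\vrho}\vphi$ to be non-decreasing and continuous on $\Rc$. By translation invariance of the dynamics,
\begin{equation*}
\Exp_{\sigb^{\vrho,\,\la}}\vphi(\tau_j\omb(Nt))=\Exp_{\tau_{-j}\sigb^{\vrho,\,\la}}\vphi(\omb(Nt)),
\end{equation*}
and as $j$ grows the step of $\tau_{-j}\sigb^{\vrho,\,\la}$ slides leftward, so the initial measure is stochastically non-increasing in $j$ (sites that switch from $\nu^\vrho$ to $\nu^\la$ lose density). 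Attractiveness preserves this ordering at time $Nt$, hence $j\mapsto\Exp_{\sigb^{\vrho,\,\la}}\vphi(\tau_j\omb(Nt))$ is non-increasing in $j$.

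With this monotonicity in hand, for each $\delta>0$ the target expectation $\Exp\vphi(\tau_{[Nx]}\omb(Nt))$ is sandwiched above by the average of the same expectation over $j\in\{[N(x-2\delta)],\dots,[N(x-\delta)]\}$ and below by the analogous average over $j\in\{[N(x+\delta)],\dots,[N(x+2\delta)]\}$. Each block average is a Riemann sum in the sense of \eqref{eq:landimexpvlimit} tested against $\delta^{-1}\ind_{[x-2\delta,\,x-\delta]}$ and $\delta^{-1}\ind_{[x+\delta,\,x+2\delta]}$ respectively. I would approximate these indicators by continuous compactly supported $\psi^\pm_n$ from above and below, apply \eqref{eq:landimexpvlimit}, and pass to the monotone limit $n\to\infty$, using the integrability hypothesis together with stochastic domination of $\sigb^{\vrho,\,\la}$ by the product of $\pib^{\vrho_{\max}}$ to control the tails of $\vphi$. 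This yields
\begin{equation*}
\limsup_{N}\Exp\vphi(\tau_{[Nx]}\omb(Nt))\le\frac{1}{\delta}\int_{x-2\delta}^{x-\delta}\Psi(u^0(y,t))\di y
\end{equation*}
and the mirror lower bound with the other interval. Since $x$ is a continuity point of $u^0(\,\cdot\,,t)$ and $\Psi$ is continuous, both right-hand sides tend to $\Psi(u^0(x,t))$ as $\delta\downarrow 0$, which proves \eqref{eq:stronghdl} for monotone $\vphi$.

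For a bounded cylinder $\vphi$, the previous step specialized to the non-decreasing indicators $\ind\{\om_i\ge c_i,\,i\in A\}$ ($A\subset\Zbb$ finite) gives convergence of every upper-set probability of the law of $\tau_{[Nx]}\omb(Nt)$ to the corresponding probability under $\pib^{u^0(x,t)}$. Inclusion–exclusion then promotes this to convergence of all finite-dimensional marginals, and bounded cylinder functions are continuous in the product topology, so \eqref{eq:stronghdl} follows in full generality.

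The main obstacle I anticipate is the joint control of the unbounded tails of $\vphi$ and the passage from continuous test functions in \eqref{eq:landimexpvlimit} to indicator-type averages: this exchange of limits must be justified using the integrability $\Exp_{\pib^{\vrho_{\max}}}\abs\vphi<+\infty$ and a stochastic envelope by the stationary product of density $\vrho_{\max}$. The shifted family $u^\veps$ in the hypothesis, while not strictly needed once attractiveness is used, provides a robust alternative route: stochastic comparison of the process from $\sigb^{\vrho,\,\la}$ with the ones from $\tau_{[N\veps]}\sigb^{\vrho,\,\la}$ ($\veps$ of either sign), combined with $u^\veps\to u^0$ at continuity points, closes the sandwich without having to exhibit that $u^\veps$ is literally a translate of $u^0$.
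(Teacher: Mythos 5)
Your argument is correct in substance and rests on the same engine as the paper's proof: attractiveness plus the stochastic ordering of the shifted step measures makes $j\mapsto\Exp_{\sigb^{\vrho,\,\la}}\vphi(\tau_j\omb(Nt))$ monotone in $j$, the single-site expectation is then sandwiched between block averages, the assumed limit \eqref{eq:landimexpvlimit} turns those averages into integrals of $\vrho\mapsto\Exp_{\pib^{\vrho}}\vphi$ composed with the profile, and the window is shrunk at a continuity point, with a truncation $\vphi\wedge M$ handling unbounded monotone $\vphi$. Where you genuinely diverge is in the bookkeeping: you only ever use the $\veps=0$ instance of \eqref{eq:landimexpvlimit}, placing the averaging windows on $[x-2\delta,x-\delta]$ and $[x+\delta,x+2\delta]$, so the shifted profiles $u^{\veps}$, their spatial monotonicity, and the hypothesis $u^{\veps}(x,t)\to u^{0}(x,t)$ are never invoked; the paper instead keeps the window adjacent to $[Nx]$, shifts the initial measure by $[N\veps_\pm]$, bounds the resulting integrals by $\Exp_{\pib^{u^{\veps_\pm}(x,t)}}\vphi$ via monotonicity of $u^{\veps_\pm}$, and only then sends $\veps\to0$ — your route is leaner on hypotheses, the paper's matches the form in which the hydrodynamic input is usually quoted. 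Your treatment of the bounded case (upper-set indicators, inclusion–exclusion, then convergence of the finite-dimensional marginal) also differs from the paper's one-line reduction writing a bounded cylinder function as a difference of two monotone ones; both devices work, and both tacitly use \eqref{eq:landimexpvlimit} for auxiliary cylinder functions beyond the fixed $\vphi$, which is harmless in the intended applications where the hydrodynamic limit holds for all cylinder functions. One imprecision to fix: the tail control you invoke via ``stochastic domination of $\sigb^{\vrho,\,\la}$ by the product of $\pib^{\vrho_{\max}}$'' is not among the hypotheses — $\pib^{\vrho_{\max}}$ need not be a product measure nor dominate the initial marginals; the uniform-in-$N$ bound needed when you replace indicators by continuous $\psi$ should instead come from attractiveness against $\sigb^{\vrho,\,\vrho}$ (immediate when $\vphi$ is bounded, and available when the initial marginals are stationary), while $\Exp_{\pib^{\vrho_{\max}}}\abs{\vphi}<+\infty$ is what controls the truncation error $\Exp_{\pib^{\vrho}}(\vphi-\vphi\wedge M)$ inside the stationary family, exactly as in the paper.
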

By continuity of the set $(\pib^{\vrho})_{\vrho\in\Rc}$ we mean that if $\vrho_n\to\vrho$ as $n\to+\infty$, where $\vrho_n,\vrho\in\Rc$, then $\pib^{\vrho_n}\to\pib^{\vrho}$ in the weak sense. Furthermore, the monotonicity of $\vphi$ preserves the coordinate-wise order of configurations $\omb,\etab\in\Omega$, that is if $\omb\geq\etab$ then $\vphi(\omb)\geq\vphi(\etab)$. The convergence in \eqref{eq:stronghdl} is also called the \emp{conservation of local equilibrium} (cf.\ \cite[Chapter 1]{cl}).

\subsection{Hydrodynamics of symmetric models}\label{sec:symhydro}
In our context, being symmetric means $q(\om_i,\om_{i+1})=p(\om_{i+1},\om_i)$ for each $\omb\in\Omega$. Note that attractiveness is still up, that is $p$ is required to be monotone non-increasing (non-decreasing) in its first (second) variable. We say that a symmetric attractive process is \emp{gradient} if there exists a cylinder function $g:\Omega\to\Rbb$ for which
\begin{equation}\label{eq:gradientcondition}
p(\om_i,\om_{i+1}) - p(\om_{i+1},\om_i) = g(\tau_{i}\omb) - g(\tau_{i+1}\omb)
\end{equation}
holds for every $i\in\Zbb$ and $\omb\in\Omega$ (recall the shift operator $\tau$ after Definition \ref{def:hydrolimit}). Usually it is more convenient and simpler to deal with \emp{attractive gradient systems}. For such systems the key quantity turns out to be the \emp{diffusivity coefficient} $d$ which is defined to be $d(\vrho)=\Exp_{\pib^{\vrho}}g(\omb)$, where $\pib^{\vrho}$ is a stationary distribution of the process with density $\vrho$. Note the difference between $d$ and the hydrodynamic flux $G$ \eqref{eq:hydrofluxdef} being its hyperbolic counterpart.

The concepts of hydrodynamics of the previous subsection can be exactly repeated here, except that this time the relevant scaling is \emp{diffusive} instead of hyperbolic. Hence the macroscopic behavior of the density field is described by a \emp{parabolic} partial differential equation of the form
\begin{equation}\label{eq:symhydrogeneral}
\left.
\begin{aligned}
\partial_t u &= \text{{\small$\frac{1}{2}$}}\, \Delta d(u)\\
u(\,\cdot\,,0) &= v(\cdot)
\end{aligned}
\right\}
\end{equation}
where $v$ is the step function defined in \eqref{eq:stepinitcond}. In general it is not so obvious for \eqref{eq:symhydrogeneral} to have a unique bounded (classical or weak) solution due to the discontinuity of $v$ and the smoothness of $d$. We skip investigating this issue by assuming that there always exists a unique weak solution to \eqref{eq:symhydrogeneral}. The hydrodynamic limit of gradient systems is well known (see \cite{cl}, \cite[Chapter 8]{transinvexcl} and many references therein, particularly \cite{ferrpresuttivaressymzr} and \cite{landiminfinite}) and methods partially extend to non-gradient systems \cite{perrut} as well.

\section{Particular examples}\label{sec:partmodels}

We have selected some particular models in order to demonstrate the versatility of our results. Our general framework contains several well studied examples like the (totally) asymmetric simple exclusion process or the class of zero-range processes. We first list asymmetric and then symmetric processes with additional descriptions. Once for all we fix two reals $\bar{p},\bar{q}$ for which $0\leq \bar{p}<\bar{q}\leq 1$ and $\bar{p}+\bar{q}=1$ hold.
\paragraph{Generalized exclusion processes}
Many systems with bounded occupations lie in this class but we only illustrate two of them.

The first one is the \emp{$2$-type model} of \cite{b_n_t_t_coex16}, which is a totally asymmetric process with $\Ic=\{-1,0,+1\}$ and rates
\begin{equation}\label{eq:2typemodelrates}
p(0,0) = c,\qquad p(0,-1)=p(+1,0) = \frac{1}{2}, \qquad p(+1,-1) = 1
\end{equation}
and $q\equiv 0$. The dynamics consists of the following simple rules: two adjacent holes can produce an antiparticle-particle pair (\emp{creation}), (anti)particles can hop to the (negative) positive direction (\emp{exclusion}), and when a particle meets an antiparticle they can annihilate each other (\emp{annihilation}). The process is attractive if and only if $c\leq \frac{1}{2}$ and lies in the range of Theorem \ref{thm:prodstatergod}.

The hydrodynamic behavior, but \emp{not} the second class particles, of the model has been thoroughly investigated by M.\ Bal\'azs, A.\ L.\ Nagy, B.\ T\'oth and I.\ T\'oth \cite{b_n_t_t_coex16}. In that article the hydrodynamic flux $G$ was explicitly calculated, which turned out to be neither concave nor convex in some region of the parameter space. Hence the entropy solution of the hydrodynamic equation can produce various mixtures of rarefaction fans and shock waves. By \eqref{eq:scpspeedlim} it implies that the limit distribution of the second class particle can have both continuous and discrete parts which will be demonstrated in the following. Using the results of \cite{b_n_t_t_coex16} we can basically evaluate \eqref{eq:scpspeedlim} of Theorem \ref{thm:scphydroasym} in each case but we highlight only two of them below. For sake of simplicity we let $\vrho=1$ and $\la=-1$.
\paragraph{Concave flux} In the region $\frac{1}{16}\leq c\leq \frac{1}{2}$: the hydrodynamic flux $G$ is concave \cite{b_n_t_t_coex16}. In particular for $c=\frac{1}{16}$, Figure \ref{fig:riemannconcave} demonstrates how the one parameter family of limit distributions of the second class particle evolves in time as $t\in[0,1]$. We notice that for all $t>0$ the cumulative distribution function $F_t^{Q}$ is \emp{continuous} but has a vertical ``slope'' at the origin. Thus its density is unbounded around zero.
\begin{figure}[!htb]
\centering
\includegraphics[width=0.8\textwidth]{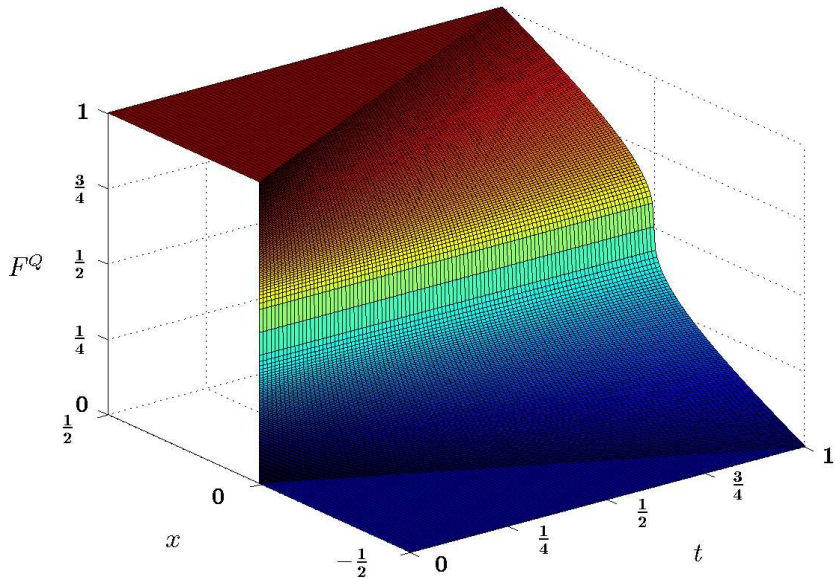}
\caption[Limit distribution of SCP in 2-type model -- concave case]{The limit distribution of second class particle when $c=\frac{1}{16}$, the hydrodynamic flux $G$ being (non-strictly) concave. In particular, a vertical slice of the surface gives a limit distribution $F^Q_t(\,\cdot\,):\,=\lim_{N\to+\infty}\hat\Prob\{\frac{1}{N}Q(Nt)\leq \cdot\}$ for a fixed $t$.}\label{fig:riemannconcave}
\end{figure}

\paragraph{Non-convex flux} In the region $0<c<\frac{1}{16}$: the hydrodynamic flux $G$ is neither concave nor convex \cite{b_n_t_t_coex16}. As a particular example, for $c = \frac{1}{324}$ the model can develop a (non-linear) \emp{rarefaction fan -- shock -- rarefaction fan profile} in the hydrodynamic limit. The second class particle then may stick into the shock with probability $v_{\max}:\,=\frac{1}{2}\sqrt{\frac{1-16c}{1-4c}}$ or it follows a continuously chosen characteristics in one of the regions of the rarefaction fan. Figure \ref{fig:riemannnonconvex} demonstrates this behavior as $t\in[0,1]$.
\begin{figure}[!htb]
\centering
\includegraphics[width=0.8\textwidth]{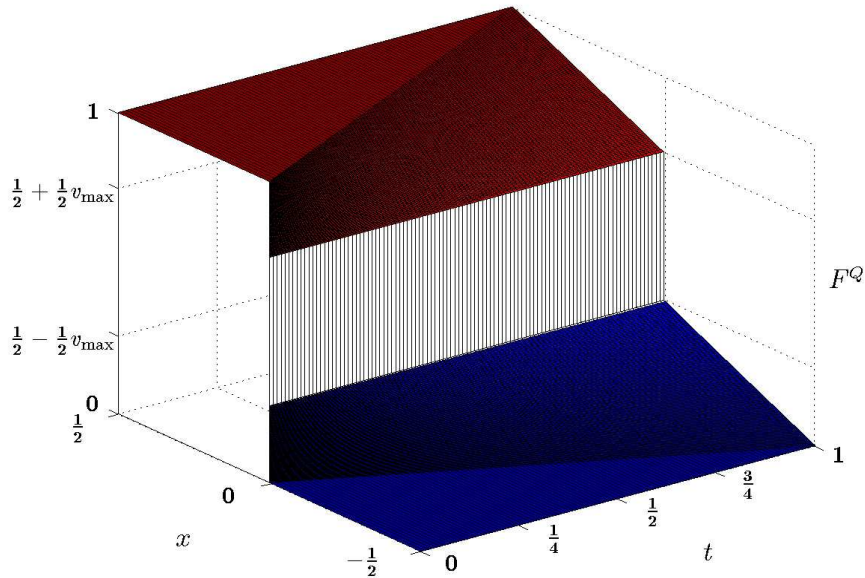}
\caption[Limit distribution of SCP in 2-type model -- non-concave, non-convex case]{The limit distribution of the second class particle in the case of $c = \frac{1}{324}$ when the hydrodynamic flux $G$ is neither concave nor convex. A vertical slice of the surface gives a particular limit distribution.}\label{fig:riemannnonconvex}
\end{figure}

Relying again on \cite{b_n_t_t_coex16} we finally note that one can explicitly calculate the estimate of Theorem \ref{thm:collprob}: the collision of two second class particles starting from the rarefaction fan has at least probability $C_0 = G(0) = \frac{\sqrt{c}}{1+2\sqrt{c}}\in(0,\frac{1}{2+\sqrt{2}}]$ in this model.

Another example we highlight is the \emp{$K$-exclusion process}, where $K$ is any positive integer. Set $\Ic=\{0,1,\ldots,K-1,K\}$ and let the rates be
\begin{equation*}
p(\om_i,\om_{i+1}) = \bar{p}\cdot\ind\{\om_i>0; \om_{i+1}<K\},\;
q(\om_i,\om_{i+1}) = \bar{q}\cdot\ind\{\om_{i+1}>0; \om_i<K\}.
\end{equation*}
In particular for $K=1$ we obtain the asymmetric simple exclusion process with the family of Bernoulli product measures as extremal translation-invariant stationary distributions which work well for Assumption \ref{assump:marginalmeasure} and thus recover the result of \cite{serf}.
For $K>1$ much less is known (the assumptions of Theorem \ref{thm:prodstatergod} cease to hold). In particular, it is not known whether its density parametrized translation-invariant extremal stationary distributions span the range $[0,K]$. They are proved to exist for some closed parameter set $\Rc\subset[0,K]$ (see \cite[Corollary 2.1]{bagurasa}). The structure of these measures is also unknown. The model, however, exhibits a hydrodynamic limit resulting in a conservation law with a \emp{concave} flux $G$ (see Theorem \ref{thm:bagurasa} and also \cite{hkl}). For $G$ only some qualitative properties have been established (see \cite{hkl}).
Nevertheless, one can still apply Theorem \ref{thm:scphydroasym} with any product initial distribution that satisfies Assumption \ref{assump:marginalmeasure}.
\paragraph{Zero range processes} Let $\omin=0$ and $\omax=+\infty$. The jump rates are defined as $p(\om_i,\om_{i+1}) = \bar{p}\cdot f(\om_i)$ and $q(\om_i,\om_{i+1}) = \bar{q}\cdot f(\om_{i+1})$, where $f:\Zbb^+\to\Rbb^+$ is a monotone non-decreasing function with at most linear growth and with $f(0)=0$. This family satisfies all the assumptions of Theorem \ref{thm:prodstatergod}, hence hydrodynamics follows via Theorem \ref{thm:rezakhydro}. For sake of brevity we only spell out two totally asymmetric examples ($\bar{p} = 1$). The hydrodynamic flux is then given by $G(\vrho)=\exp(\theta(\vrho))$. The two most well-known special cases we consider are the ones of constant and linear rates: $f(x)=\ind\{x>0\}$ and $f(x)=x$, respectively. In the former case the extremal translation-invariant stationary distributions are of product form with geometric site-marginals while in the latter the Poisson distribution takes over this role. $G(\vrho)$ is $\vrho\cdot(1+\vrho)^{-1}$ and $\vrho$, respectively.

A straightforward computation then shows (see \cite[Section 2.2, pp. 30--36]{holdenrisebro}), that for the totally asymmetric constant rate zero-range process, \eqref{eq:scpspeedlim} takes the form:
\[
\lim_{N\to+\infty}\hat\Prob\bigg\{\frac{Q(Nt)}{N}\leq x\bigg\} =
\left\{
\begin{aligned}
&0 &&
 \text{if }\textstyle{(1+\vrho)^2\leq \frac{t}{x}};\\
&\textstyle{\frac{\vrho+1}{\vrho-\la}-\frac{1}{\vrho-\la}\sqrt{\frac{t}{x}}}&&
\text{if }\textstyle{(1+\la)^2\leq \frac{t}{x}<(1+\vrho)^2};\\
&1 &&
\text{if }\textstyle{(1+\la)^2 > \frac{t}{x}}.
\end{aligned}
\right.
\]
Due to Remark \ref{rem:bagurasaunbounded} we in fact get this law for the limit velocity of the second class particle no matter how we choose, still under Assumption \ref{assump:marginalmeasure}, the initial marginals of \eqref{eq:initprodmeas}.

The totally asymmetric linear rate zero-range process is a much easier story (system of independent walkers). In that case $Q$ is a unit rate Poisson process in agreement with the unique entropy solution of the transport equation $\partial_t u +\partial_x u = 0$, being $u(x,t)=u_0(x-t)$. No novelty here, of course.
\paragraph{Deposition models} Now, let $\omin=-\infty$ and $\omax=+\infty$. The generalized bricklayers' process is defined to have rates:
\[
p(\om_i,\om_{i+1}) = \bar{p}\cdot(f(\om_i) + f(-\om_{i+1}));\;
q(\om_i,\om_{i+1}) = \bar{q}\cdot(f(-\om_i) + f(\om_{i+1})),
\]
where $f:\Zbb\to\Rbb^+$ is any monotone non-decreasing function, also having the property that $f(x)\cdot f(1-x) = 1$ for all $x\in\Zbb$. This family was first introduced and investigated in \cite{valak}. For rates growing at most exponentially, existence of dynamics was showed in the totally asymmetric case \cite{exists}. However, results concerning hydrodynamics have not been established yet for unbounded rates, and so Theorem \ref{thm:scphydroasym} is conditional in this case.

In particular, we obtain the \emp{totally asymmetric exponential bricklayers' process} if we set $\bar{p} = 1$ and $f(x)=\exp(\beta(x-1/2))$ ($x\in\Zbb$), where $\beta$ is a fixed positive real. Then Theorem \ref{thm:prodstatergod} shows that the product of \emp{discrete Gaussian distributions}, defined as
\begin{equation*}
\Gamma^{\theta(\vrho)}(x)=
\frac{1}{Z(\theta(\vrho))}\cdot\exp\big(-\beta\cdot\big(x-\theta(\vrho)/\beta\big)^2/2\big)\qquad (x\in\Zbb),
\end{equation*}
where $\vrho\in\Rbb$, $\theta\in\Rbb$ and $Z(\theta(\vrho))=\sum_{y\in\Zbb}\exp\big(-\beta\cdot(y-\theta(\vrho)/\beta)^2/2\big)$, is stationary. This measure enjoys the remarkable property that
\begin{equation*}
\Gamma^{\theta(\vrho)}(x)=\Gamma^{\theta(\vrho)-\beta}(x-1)=\Gamma^{\theta(\vrho-1)}(x-1)\qquad (\vrho\in\Rbb, x\in\Zbb).
\end{equation*}
This fact indeed implies that the discrete Gaussian satisfies even condition \eqref{eq:couplingstrict} of Proposition \ref{prop:probcouplingexist}. Articles \cite{valak,sokvalak,rwshscp} made also good use of the previous identity for exploring special random walking shock-like product distributions that also include the second class particle. Finally, if we use $\Gamma^{\theta(\vrho)}$ and $\Gamma^{\theta(\la)}$ as initial marginals with $\vrho=\la+1\in\Rbb$, then the measure $\hat{\nu}^{\vrho,\,\la}$ gets a particularly simple form, namely $\hat\nu^{\vrho,\,\la}(x,y)=\Gamma^{\theta(\la)}(y)\cdot\ind\{x=y+1\}$.
\paragraph{Symmetric exclusion processes} One of the most studied gradient processes is the \emp{simple symmetric exclusion process} with $\Ic=\{0,1\}$ and with rates $p(\om_i, \om_{i+1}) = q(\om_{i+1},\om_{i}) = \om_i\cdot(1-\om_{i+1})$. In this case it is straightforward that the motion of the second class particle is a \emp{simple symmetric random walk}. Hence its scaling results in the normal distribution. Our machinery implies this very simple fact and is clearly an overshoot for this case.

Nevertheless, the scaling limit of the second class particle in our scenario becomes far less trivial for more sophisticated symmetric models like the next one. Let $\Ic=\{-1,0,+1\}$ and define the non-vanishing rates as
\begin{gather*}
p(0, 0) = q(0, 0) = c,\qquad p(+1, -1) = q(-1, +1) = 2,\\
p(0, -1) = q(-1, 0) = p(+1, 0) = q(0, +1) = 1
\end{gather*}
with $0<c\le1$. Note the essential difference between this and the simple symmetric exclusion process from before. We remark that this model is the symmetrized version of the $2$-type model of \cite{b_n_t_t_coex16} we discussed earlier (see \eqref{eq:2typemodelrates}).

The above defined symmetric processes both enjoy product-form stationary distributions by Theorem \ref{thm:prodstatergod}. Their common gradient function is $g(\omb)=\om_0$ that is the macroscopic behavior is described by the homogeneous heat equation $\partial_t u=\frac{1}{2}\,\Delta u$ in both cases. Hence
\[
\lim_{N\to+\infty}\hat\Prob\bigg\{\frac{Q(Nt)}{\sqrt{N}}\leq x\bigg\} = \Phi\bigg(\frac{x}{\sqrt{t}}\bigg),
\]
where $\Phi$ denotes the cumulative distribution function of a standard normal variable.
\paragraph{Symmetric zero-range processes} Let $\omin=0$ and $\omax=+\infty$. Then define the rates as $p(\om_i,\om_{i+1}) = f(\om_i)$ and $q(\om_i,\om_{i+1}) = f(\om_{i+1})$ with a common, monotone non-decreasing function $f:\Zbb^+\to\Rbb^+$ such that $f(0)=0$. In this case the gradient condition (see \eqref{eq:gradientcondition}) works with $g=f$. And so the macroscopic equation reads as $\partial_t u = \frac{1}{2}\Delta d(u)$, where $d(\vrho)=\Exp_{\Gmb^{\vrho}}f(\omb)$ and $\Gmb^{\vrho}$ can be read off from Theorem \ref{thm:prodstatergod}. We highlight the constant and the linear rate case, that is when $f(x)=\ind\{x>0\}$ and $f(x)=x$, respectively. In the latter case we get normal behavior for $Q$, while in the former case
\begin{equation}\label{eq:symzrpde}
\partial_t u =\frac{1}{2}\,\partial_x\bigg(\frac{1}{(1+u)^{2}}\,\partial_x u\bigg)
\end{equation}
has to be solved.
\begin{figure}[!htb]
\centering
\includegraphics[width=0.8\textwidth]{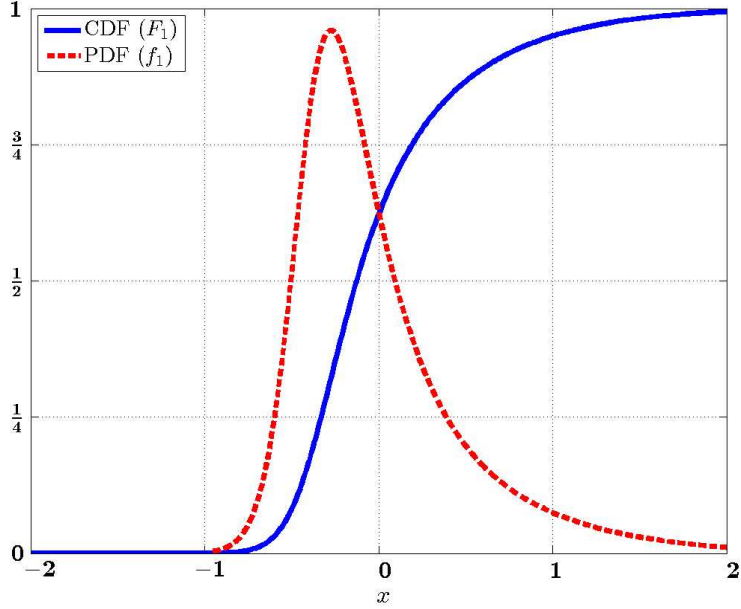}
\caption[Limit distribution of SCP in constant rate symmetric ZR process]{The limit distribution of second class particle at $t=1$ in constant rate symmetric zero-range process with $\la=0$ and $\vrho=3$.}\label{fig:symzrslices}
\end{figure}
By change of variables one can deduce and then easily verify that the unique bounded classical solution of \eqref{eq:symzrpde} with initial condition \eqref{eq:stepinitcond} is $u(x,t)=\big(h^{-1}\big)'\big(x/\sqrt{t}\big)-1$, where
\[
h(y)=\frac{y}{1+\vrho} + \bigg(\frac{1}{1+\la}-\frac{1}{1+\vrho}\bigg)\cdot\big(\vphi(y)+y\cdot\Phi(y)\big)\qquad (y\in\Rbb),
\]
$\vphi$ and $\Phi$ denote the probability density function and the cumulative distribution function of a standard normal variable, while $h^{-1}$ is the inverse function of $h$. This gives that the limit distribution function of \eqref{eq:scpdiffusivelim} is of the form:
\[
F_t(x):\,=\lim_{N\to\infty}\hat\Prob\bigg\{\frac{Q(Nt)}{\sqrt{N}}\leq x\bigg\}=\frac{\vrho+1}{\vrho-\la}-\frac{1}{\vrho-\la}\cdot\frac{1}{h'\big(h^{-1}\big(x/\sqrt{t}\big)\big)},
\]
thus its density function is
\[
f_t(x):\,=\frac{\mathrm{d}}{\mathrm{d}x}\,F_t(x)=\frac{1}{\sqrt{t}}\cdot\frac{1}{(1+\vrho)\cdot(1+\la)}
\cdot\frac{\vphi\big(h^{-1}\big(x/\sqrt{t}\big)\big)}{\big(h'\big(h^{-1}\big(x/\sqrt{t}\big)\big)\big)^3}.
\]
Observe that the limit distribution has always zero expectation though the position of the second class particle is \emp{not} a martingale. Furthermore, $F_{t}$ is positively skewed and that it has negative mode as well as median implying that the second class particle is more likely to have a \emp{negative speed}. In other words, it locally sees more first class particles before him rather than behind him which seems an unexpected phenomenon. An illustration can be found in Figure \ref{fig:symzrslices}.

\section{Proofs}\label{sec:proof}

We first prove the fundamental results of Section \ref{sec:furth}.
\begin{proof}[Proof of Proposition \ref{prop:probcouplingexist}]
Assume that $\vrho-1\leq\la<\vrho$ are given, where $\vrho,\la\in\Dc$.
To be able to construct a joint probability measure $\nu^{\vrho,\,\la}$ with the corresponding marginal distributions $\nu^{\vrho}$ and $\nu^{\la}$ in such a way that $\nu^{\vrho,\,\la}(\{(x,y):x-y\in\{0,1\}\})=1$ also holds we must certainly have that
$\nu^{\vrho,\,\la}(x,y)=0$ whenever $x-y\not\in\{0,1\}$. For sake of simplicity the only non-vanishing probabilities are denoted by
\[
c_{y,y}:=\nu^{\vrho,\,\la}(\{(y,y)\})\quad\text{ and }\quad c_{y+1,y}:=\nu^{\vrho,\,\la}(\{(y+1,y)\}),
\]
where $y\in\Ic$\, ($c_{\omax+1,\omax}$ is defined to be zero).

For the marginals to match we have the following constraints:
\[
c_{y,y}+c_{y,y-1}=\nu^{\vrho}(\{y\})\quad\text{ and }\quad c_{y,y}+c_{y+1,y} = \nu^{\la}(\{y\}),
\]
which implies that $c_{y+1,y}=c_{y,y-1} + \nu^{\la}(\{y\})-\nu^{\vrho}(\{y\})$. It then follows by recursion that
\begin{equation}\label{eq:jointmeasure1}
c_{y+1,y}=\nu^{\la}(\{z:z\leq y\})-\nu^{\vrho}(\{z:z\leq y\}),
\end{equation}
hence we obtain that
\begin{equation}\label{eq:jointmeasure2}
c_{y,y}=\nu^{\vrho}(\{z:z\leq y\})-\nu^{\la}(\{z:z\leq y-1\})
\end{equation}
holds for every $y\in\Ic$.
Since the last two expressions must be non-negative, these provide the stochastic dominance in Assumption \ref{assump:marginalmeasure} and the condition \eqref{eq:couplingstrict}, respectively.

Finally, the identity $\sum_{y\in\Ic}c_{y+1,y}=\vrho-\la$ proves \eqref{eq:condmeasure}.
\end{proof}
\begin{proof}[Proof of Proposition \ref{prop:negativeresult}]
First, in the geometric case, let $0<p<p'\leq1$,
\[
\nu^{\vrho}(\{n\}):=p\cdot(1-p)^n\quad\text{ and }\quad \nu^{\la}(\{n\}):=p'\cdot(1-p')^n \qquad (0\leq n\in\Zbb)
\]
be the weights of the Geometric distributions with mean $\vrho:=1/p-1$ and $\la:=1/p'-1$, respectively.
It is an obvious fact that $\nu^{\vrho}$ stochastically dominates $\nu^{\la}$, that is Assumption \ref{assump:marginalmeasure} holds. But condition \eqref{eq:couplingstrict} is violated, since the inequality
\[
\begin{aligned}
0&
\leq\;\nu^{\vrho}(\{n:n\leq y\})-\nu^{\la}(\{n:n\leq y-1\})\\
&=\sum_{n=0}^y p\cdot(1-p)^n-\sum_{n=0}^{y-1}p'\cdot(1-p')^n\\
&=(1-p')^y\left[1-(1-p')\left(\frac{1-p}{1-p'}\right)^{y+1}\right]
\end{aligned}
\]
cannot hold for all $0\leq y\in\Zbb$ simultaneously.

In the Poisson case, let $0\leq\la<\vrho<+\infty$,
\[
\nu^{\vrho}(\{n\}):=\exp(-\vrho)\cdot\frac{\vrho^n}{n!}\quad\text{ and }\quad \nu^{\la}(\{n\}):=\exp(-\la)\cdot\frac{\la^n}{n!} \qquad (n\in\Zbb^+)
\]
be the weights of the corresponding Poisson distributions. The cumulative distribution functions are of the form
\[
 \nu^{\vrho}(\{n:n\leq y\})=\frac{1}{y!}\int_{\vrho}^{+\infty}s^y\exp(-s)\,\mathrm{d}s\qquad(y\in\Zbb^+)
\]
via integration by parts. This shows the validity of Assumption \ref{assump:marginalmeasure} as well. Condition \eqref{eq:couplingstrict}, however, does not hold, since the following inequality cannot hold for all $0\leq y\in\Zbb$:
\[
\begin{aligned}
0&
\leq\;\nu^{\vrho}(\{n:n\leq y\})-\nu^{\la}(\{n:n\leq y-1\})\\
&=\frac{1}{y!}\left[\int_{\vrho}^{+\infty}s^y\exp(-s)\,\mathrm{d}s-
\int_{\la}^{+\infty}y\cdot s^{y-1}\exp(-s)\,\mathrm{d}s\right]\\
&=\exp(-\la)\cdot\frac{\la^y}{y!}
\left[1-\int_{\la}^{\vrho}\left(\frac{s}{\la}\right)^y\exp(\la-s)\,\mathrm{d}s\right],
\end{aligned}
\]
where at the last inequality we took advantage of the integration by parts formula.
\end{proof}

Next, we define the notion of \emp{particle current} that we will need subsequently. Denote by $\Sbb$ the set of half integers, that is $\Sbb:\,=\Zbb+\oh=\big\{i+\oh:i\in\Zbb\big\}$. For each $\omb\in\Omega$ we assign a \emp{height configuration}
\[
\hb = \big(\ldots,\,h_{-\thh},\,h_{-\oh},\,h_{\oh},\,h_\thh,\,\ldots\big)\in\Zbb^{\Sbb}
\]
such that $\om_i=h_k-k_{k+1}$ holds for $k=i-\oh\in\Sbb$. Thus the negative discrete gradient of $\hb$ provides the system $\omb$. Reversing this gives the heights as a function of $\omb$:
\begin{equation}
h_k=\left\{
  \begin{aligned}
   &h_{\oh}-\sum_{i=1}^{k-\oh}\om_i&&\text{if }\oh<k\in\Sbb;\\
   &h_{\oh}+\sum_{i=k+\oh}^0\om_i&&\text{if }\oh>k\in\Sbb,
  \end{aligned}
\right.\label{eq:hdef}
\end{equation}
except for a constant simultaneous shift in all of the height values. We fix this integration constant by postulating $h_{\oh}(0)$ to be zero initially. Then, the dynamics \eqref{eq:dynwpq} translates to
\[
\hb\;\xrightarrow{\displaystyle p(\om_i,\om_{i+1})}\;\hb+\delta_{i+\oh},\qquad
\hb\;\xrightarrow{\displaystyle q(\om_i,\om_{i+1})}\;\hb-\delta_{i+\oh}.
\]
Equivalently, $h_{\oh}(t)-h_\oh(0)$ denotes the number of \emp{signed} particle hops that occurred above the bond $[0,\,1]$ until time $t\geq0$. In a similar fashion, one can think of $h_k(t)$ as the \emp{signed particle current} through the space-time line $(\oh,\,0)\to(k,t)$, where $k\in\Sbb$ and $t\in\Rbb^+$.
\begin{proof}[Proof of Theorem \ref{thm:scpdistr}]
Fix $t\geq0$, $n\in\Zbb$ and let $k=n+\oh$. We start two processes $\omb$ and $\etab$ with respective initial distributions $\sigb^{\vrho,\,\la}$ and $\tau_1\sigb^{\vrho,\,\la}$, where recall \eqref{eq:initprodmeas} and that
\begin{equation}\label{eq:shiftedprodinitmeas}
\tau_1\sigb^{\vrho,\,\la} = \bigotimes_{i=-\infty}^{-1}\nu^{\vrho}\otimes\bigotimes_{i=0}^{+\infty}\nu^{\la},
\end{equation}
which is just the left-shifted version of $\sigb^{\vrho,\,\la}$. The corresponding height functions are denoted by $\hb$ and $\gb$.

Closely following \cite{serf}, we will compute the following quantity in two different ways:
\begin{equation}
 \Exp_{\sigb^{\vrho,\,\la}} h_k(t)\; -\; \Exp_{\tau_1\sigb^{\vrho,\,\la}} g_k(t).\label{eq:hdiff}
\end{equation}
Define the \emp{possibly signed} site-marginal
\begin{align}\label{eq:nuhat}
\bar\nu^{\vrho,\,\la}(x,y) =
\begin{cases}
\nu^{\vrho}(\{z:z\leq y\})-\nu^{\la}(\{z:z\leq y-1\}) & \text{if $x=y$;}\\
\nu^{\la}(\{z:z\leq y\})-\nu^{\vrho}(\{z:z\leq y\}) & \text{if $x=y+1$;}\\
0 & \text{otherwise}
\end{cases}
\end{align}
on \(\Ic\times\Ic\), and the \emp{possibly signed} product measure
\[
 \bar{\mub}:\,=\bigotimes_{i=-\infty}^{-1}\nu^{\vrho,\,\vrho}\otimes\bar\nu^{\vrho,\,\la}\otimes\bigotimes_{i=1}^{+\infty}\nu^{\la,\,\la},
\]
with $\nu^{\vrho,\,\vrho}$, $\nu^{\la,\,\la}$ defined in \eqref{eq:diagonalmeasures}. Note that the marginal $\bar\nu^{\vrho,\,\la}$ is the one we have just elaborated in \eqref{eq:jointmeasure1}--\eqref{eq:jointmeasure2}. Proposition \ref{prop:probcouplingexist} tells that a coupling is possible with zero or one discrepancy if and only if $\bar\nu^{\vrho,\,\la}$ is a probability measure (see \eqref{eq:couplingstrict}). This does not need to be the case, $\bar\nu^{\vrho,\,\la}$, hence $\bar{\mub}$, might put negative mass for some initial configuration pairs. But thanks to Assumption \ref{assump:marginalmeasure}, it may only put negative mass on coinciding initial configurations, that is when $\omb(0)\equiv\etab(0)$. In other words, $\bar\nu^{\vrho,\,\la}(y,y)$ can possibly be negative but $\bar\nu^{\vrho,\,\la}(y+1,y)$ cannot.

The formal computations in the proof of Proposition \ref{prop:probcouplingexist} still work and show that $\bar\nu^{\vrho,\,\la}$ has respective first and second marginals $\nu^\vrho$ and $\nu^\la$. Therefore
\begin{multline*}
\Exp_{\sigb^{\vrho,\,\la}} h_k(t)-\Exp_{\tau_1\sigb^{\vrho,\,\la}} g_k(t)\\
\begin{aligned}
&=\sum_{x\in\Ic}\Exp_{\sigb^{\vrho,\,\la}}[h_k(t)\,|\,\om_0(0)=x]\cdot\nu^\vrho(x)-\sum_{y\in\Ic}\Exp_{\tau_1\sigb^{\vrho,\,\la}}[g_k(t)\,|\,\eta_0(0)=y]\cdot\nu^\vrho(y)\\
&=\sum_{x,y\in\Ic}\Bigl\{\Exp_{\sigb^{\vrho,\,\la}}[h_k(t)\,|\,\om_0(0)=x]-\Exp_{\tau_1\sigb^{\vrho,\,\la}}[g_k(t)\,|\,\eta_0(0)=y]\Bigr\}\cdot\bar\nu^{\vrho,\,\la}(x,y)\\
&=\sum_{y\in\Ic}\Bigl\{\Exp_{\sigb^{\vrho,\,\la}}[h_k(t)\,|\,\om_0(0)=y]-\Exp_{\tau_1\sigb^{\vrho,\,\la}}[g_k(t)\,|\,\eta_0(0)=y]\Bigr\}\cdot\bar\nu^{\vrho,\,\la}(y,y)\\
&\quad+\sum_{y\in\Ic}\Bigl\{\Exp_{\sigb^{\vrho,\,\la}}[h_k(t)\,|\,\om_0(0)=y+1]-\Exp_{\tau_1\sigb^{\vrho,\,\la}}[g_k(t)\,|\,\eta_0(0)=y]\Bigr\}\cdot\bar\nu^{\vrho,\,\la}(y+1,y).
\end{aligned}
\end{multline*}
In the first to last line the conditional initial distributions agree for $\omb$ and $\etab$, hence so do the expectations and we get zero. In the last line the mass $\bar\nu^{\vrho,\,\la}(y+1,y)$ is non-negative, and we replace this mass via \eqref{eq:scpcouplatorig}:
\begin{multline*}
\Exp_{\sigb^{\vrho,\,\la}} h_k(t)-\Exp_{\tau_1\sigb^{\vrho,\,\la}} g_k(t)\\
\begin{aligned}
&=\sum_{y\in\Ic}\Bigl\{\Exp_{\sigb^{\vrho,\,\la}}[h_k(t)\,|\,\om_0(0)=y+1]-\Exp_{\tau_1\sigb^{\vrho,\,\la}}[g_k(t)\,|\,\eta_0(0)=y]\Bigr\}\cdot\bar\nu^{\vrho,\,\la}(y+1,y)\\
&=\sum_{y\in\Ic}\Bigl\{\Exp_{\sigb^{\vrho,\,\la}}[h_k(t)\,|\,\om_0(0)=y+1]-\Exp_{\tau_1\sigb^{\vrho,\,\la}}[g_k(t)\,|\,\eta_0(0)=y]\Bigr\}\cdot\hat{\nu}^{\vrho,\,\la}(y+1,y)\cdot(\vrho-\la).
\end{aligned}
\end{multline*}
As $\hat{\nu}^{\vrho,\,\la}$ is a proper probability distribution, at this moment we can reunify the conditional expectations at the last display to obtain
\begin{multline*}
\Exp_{\sigb^{\vrho,\,\la}} h_k(t)-\Exp_{\tau_1\sigb^{\vrho,\,\la}}g_k(t)\\
\begin{aligned}
&=\sum_{y\in\Ic}\hat\Exp[\hat{h}_k(t)-\hat{g}_k(t)\,|\,\hat\om_0(0)=y+1,\,\hat\eta_0(0)=y]
\cdot\hat{\nu}^{\vrho,\,\la}(y+1,y)\cdot(\vrho-\la)\\
&=(\vrho-\la) \cdot \hat\Exp[\hat{h}_k(t)-\hat{g}_k(t)]
\end{aligned}
\end{multline*}
with $\hat\Exp$ denoting the measure with initial distribution $\hat{\mub}^{\vrho,\,\la}$ of \eqref{eq:scpstart} and following the basic coupling for evolution. Finally, notice that under the basic coupling with only one initial second class particle at the origin we have $\hat{h}_k(t)-\hat{g}_k(t)=\ind\{Q(t)>n\}$ a.s., thus
\begin{equation}\label{eq:hdiffc}
\Exp_{\sigb^{\vrho,\,\la}} h_k(t)-\Exp_{\tau_1\sigb^{\vrho,\,\la}} g_k(t)=(\vrho-\la)\cdot\hat\Prob\{Q(t)>n\}.
\end{equation}

Next, the second way of computing \eqref{eq:hdiff}. We notice that starting $\omb$ in distribution $\sigb^{\vrho,\,\la}$, letting it evolve according to its dynamics \eqref{eq:dynwpq}, and then defining
\begin{equation*}
\eta_i(s):\,=\om_{i+1}(s)\qquad \text{for all $i\in\Zbb$ and $0\leq s\leq t$}
\end{equation*}
gives another joint realization with an implied expectation and with the corresponding joint initial measure in which the marginal distributions of $\omb$ and $\etab$ are the same as before. Notice that the height functions in this coupling satisfy
\begin{align*}
g_k(t)
&=g_k(t)-g_{-\oh}(t)+g_{-\oh}(t)-g_{-\oh}(0)+g_{-\oh}(0)\\
&=\left\{
    \begin{aligned}
     &-\sum_{i=0}^{k-\oh}\eta_i(t),&&\text{if }k>-\oh,\\
     &0,&&\text{if }k=-\oh,\\
     &\sum_{i=k+\oh}^{-1}\eta_i(t),&&\text{if }k<-\oh
    \end{aligned}
   \right\}+g_{-\oh}(t)-g_{-\oh}(0)+g_{\oh}(0)+\eta_0(0)\\
&=\left\{
    \begin{aligned}
     &-\sum_{i=1}^{k+\oh}\om_i(t),&&\text{if }k>-\oh,\\
     &0,&&\text{if }k=-\oh,\\
     &\sum_{i=k+\thh}^0\om_i(t),&&\text{if }k<-\oh
    \end{aligned}
   \right\}+h_{\oh}(t)-h_{\oh}(0)+0+\om_1(0)\\
&=h_{k+1}(t)-h_{\oh}(t)+h_{\oh}(t)-h_{\oh}(0)+\om_1(0)\\
&=h_k(t)-\om_{k+\oh}(t)+\om_1(0),
\end{align*}
where we used \eqref{eq:hdef}, $h_\oh(0)=g_\oh(0)=0$, and the fact that the heights $g_{-\oh}$ and $h_\oh$ change at the same time under this coupling. Thus
\[
  \Exp_{\sigb^{\vrho,\,\la}} h_k(t) - \Exp_{\tau_1\sigb^{\vrho,\,\la}} g_k(t)= \Exp_{\sigb^{\vrho,\,\la}}\om_{n+1}(t)-\Exp_{\sigb^{\vrho,\,\la}}\om_1(0)=\Exp_{\sigb^{\vrho,\,\la}}\om_{n+1}(t)-\la.
\]
Together with \eqref{eq:hdiffc} we arrive to the desired identity
\[
\hat\Prob\{Q(t)>n\}=\frac{\Exp_{\sigb^{\vrho,\,\la}}\om_{n + 1}(t)-\la}{\vrho-\la},
\]
which finishes the proof.
\end{proof}
\begin{proof}[Proof of Theorem \ref{thm:scpbackground}]
We prove formula \eqref{eq:scpbackgroundformula} in a somewhat similar fashion as we did for \eqref{eq:findispldistr} before. But instead of using again the height functions we are going to work with the occupation numbers directly.
Let $\omb$ be a process starting from $\sigb^{\vrho,\,\la}$ and evolving according to \eqref{eq:infgen}. Then fix $t\geq 0$, recall \eqref{eq:initprodmeas} and that for some $\vphi:\Ic\to\Rbb$ and $x\in\Ic$: $\Phi(x)=\sum_{y=\omin}^x\vphi(y)$, and define
\[
A_n :\,= \Exp_{\sigb^{\vrho,\,\la}}[\Phi(\om_{n}(t))] - \Exp_{\sigb^{\vrho,\,\la}}[\Phi(\om_{n+1}(t))]
\]
for every $n\in\Zbb$. Now, by translation invariance we have
\[
A_n = \Exp_{\sigb^{\vrho,\,\la}}[\Phi(\hat\om_{n}(t))] - \Exp_{\tau_1\sigb^{\vrho,\,\la}}[\Phi(\hat\eta_{n}(t))]
\]
in which $\etab$ evolves according to the same dynamics \eqref{eq:infgen} as $\omb$ but starts from $\tau_1\sigb^{\vrho,\,\la}$ instead (see \eqref{eq:shiftedprodinitmeas}). For this latter quantity one can apply the same coupling technique we worked out in the previous proof. It then follows that
\begin{align*}
A_n
&=\sum_{x\in\Ic}\Exp_{\sigb^{\vrho,\,\la}}[\Phi(\om_{n}(t))\,|\,\om_0(0)=x]\cdot\nu^{\vrho}(x)-\sum_{y\in\Ic} \Exp_{\tau_1\sigb^{\vrho,\,\la}}[\Phi(\eta_{n}(t))\,|\,\eta_0(0)=y]\cdot\nu^{\la}(y)\\
&=\sum_{x,y\in\Ic}\Bigl\{\Exp_{\sigb^{\vrho,\,\la}}[\Phi(\om_{n}(t))\,|\,\om_0(0)=x] -  \Exp_{\tau_1\sigb^{\vrho,\,\la}}[\Phi(\eta_{n}(t))\,|\,\eta_0(0)=y]\Bigr\}\cdot\bar\nu^{\vrho,\,\la}(x,y)\\
&=\sum_{y\in\Ic}\Bigl\{\Exp_{\sigb^{\vrho,\,\la}}[\Phi(\om_{n}(t))\,|\,\om_0(0)=y] -  \Exp_{\tau_1\sigb^{\vrho,\,\la}}[\Phi(\eta_{n}(t))\,|\,\eta_0(0)=y]\Bigr\}\cdot\bar\nu^{\vrho,\,\la}(y,y)\\
&+\sum_{y\in\Ic}\Bigl\{\Exp_{\sigb^{\vrho,\,\la}}[\Phi(\om_{n}(t))\,|\,\om_0(0)=y+1] - \Exp_{\tau_1\sigb^{\vrho,\,\la}}[\Phi(\eta_{n}(t))\,|\,\eta_0(0)=y]\Bigr\}\cdot\bar\nu^{\vrho,\,\la}(y+1,y),
\end{align*}
where we used that the joint measure $\bar\nu^{\vrho,\,\la}$, defined in \eqref{eq:nuhat}, has respective marginals $\nu^{\vrho}$ and $\nu^{\la}$. As before we can take advantage of the fact that the last but one sum in the previous display vanishes and that $\bar\nu^{\vrho,\,\la}(y+1,y)=\hat\nu^{\vrho,\,\la}(y+1,y)\cdot(\vrho-\la)$ so
\begin{align*}
A_n
&=\sum_{y\in\Ic}\hat\Exp[\Phi(\hat\om_n(t))-\Phi(\hat\eta_n(t))\,|\,\hat\om_0(0)=y+1,\hat\eta_0(0)=y]\cdot\hat{\nu}^{\vrho,\,\la}(y+1,y)\cdot(\vrho-\la)\\
&=(\vrho-\la)\cdot\hat\Exp[\ind\{Q(t)=n\}\cdot\vphi(\hat\om_n(t))]
\end{align*}
using again that $\hat\nu$ is a proper probability distribution and
\[
\Phi(\hat\om_n(t))-\Phi(\hat\eta_n(t))=\ind\{Q(t)=n\}\cdot\vphi(\hat\om_n(t))
\]
a.s.\ under the basic coupling.

Thus we obtained the identity
$\hat\Exp[\ind\{Q(t)=n\}\cdot\vphi(\hat\om_{Q(t)}(t))] = \frac{1}{\vrho-\la}\cdot A_n$ for every $n\in\Zbb$. Now, summing these equations up as $n$ runs from $-L$ to $L$ for some $L\in\Zbb^+$, we arrive to
\begin{equation}\label{eq:summedupQ}
\hat\Exp[\ind\{Q(t)\in[-L,L]\}\cdot\vphi(\hat\om_{Q(t)}(t))] = \frac{1}{\vrho-\la}\cdot \big(\Exp_{\sigb^{\vrho,\,\la}}\Phi(\om_{-L}(t))-\Exp_{\sigb^{\vrho,\,\la}}\Phi(\om_{L+1}(t))\big).
\end{equation}
Notice that $\Exp_{\sigb^{\vrho,\,\la}}\Phi(\om_{\pm L}(t))$ equals to $\Exp_{\tau_{\pm L}\sigb^{\vrho,\,\la}}\Phi(\om_{0}(t))$ for all $L\in\Zbb^+$ due to the translation invariant property of the rates.

First, assume that $\vphi\geq0$ and $\Exp_{\sigb^{\vrho,\,\vrho}}\Phi(\om_0(t))<+\infty$ hold. Since the measures $\tau_{\pm L}\sigb^{\vrho,\,\la}$ are dominated by $\sigb^{\vrho,\,\vrho}$ for every $L\in\Nbb_0$ and $\Phi$ is monotone non-decreasing, it then follows by attractiveness that each of the above expectations of $\Phi(\om_0(t))$ is finite. Let $M$ be a positive real and define $\Phi_M$ to be $\Phi\wedge M$. It is easy to see that $\Phi_M$ and $\Phi-\Phi_M$ are non-negative and monotone non-decreasing functions as well. Hence
\begin{multline*}
\abs{\Exp_{\tau_{\pm L}\sigb^{\vrho,\,\la}}\Phi(\om_{0}(t)) - \Exp_{\sigb^{\vrho,\,\vrho}}\Phi(\om_0(t))}\\
\leq \abs{\Exp_{\tau_{\pm L}\sigb^{\vrho,\,\la}}\Phi_M(\om_{0}(t)) - \Exp_{\sigb^{\vrho,\,\vrho}}\Phi_M(\om_0(t))} +
2\cdot\Exp_{\sigb^{\vrho,\,\vrho}}\! (\Phi-\Phi_M)(\om_0(t)),
\end{multline*}
using again the attractiveness property of $\omb$. At this point $M$ can be made large enough for the last quantity to be small, independently of $L$. Also, note that the measures $\tau_{L}\sigb^{\vrho,\,\la}$ converge weakly to $\sigb^{\vrho,\,\vrho}$ and to $\sigb^{\la,\,\la}$ as $L\to\pm\infty$, respectively. Now, due to a finite speed of propagation of information (implied by construction of the dynamics), and taking advantage of that $\Phi_M$ is bounded, it follows that $\lim_{L\to+\infty}\Exp_{\tau_{L}\sigb^{\vrho,\,\la}}\Phi_M(\om_{0}(t))=\Exp_{\sigb^{\vrho,\,\vrho}}\Phi_M(\om_0(t))$ and $\lim_{L\to-\infty}\Exp_{\tau_{L}\sigb^{\vrho,\,\la}}\Phi_M(\om_{0}(t))=\Exp_{\sigb^{\la,\,\la}}\Phi_M(\om_0(t))$. What we have just proved implies that the right-hand side of \eqref{eq:summedupQ} is bounded hence the left-hand side converges to $\hat\Exp\,\vphi\big(\hat\om_{Q(t)}(t)\big)$ by monotone convergence, and so we get the desired formula \eqref{eq:scpbackgroundformula}.

On the other hand, assume that $\vphi$ is absolutely summable: $\sum_{y\in\Ic}\abs{\vphi(y)}<+\infty$. It follows that $\vphi$ as well as $\Phi$ are bounded functions. By dominated convergence the left-hand side of \eqref{eq:summedupQ} converges to $\hat\Exp\,\vphi\big(\hat\om_{Q(t)}(t)\big)$ as $L\to+\infty$, while the convergence of the right-hand side follows directly from finite information propagation velocity and the weak limit of $\tau_{\pm L}\sigb^{\vrho,\,\la}$ as $L\to+\infty$.

We remark that the way we obtained \eqref{eq:scpbackgroundformula} could have been used to obtain \eqref{eq:findispldistr} as well. For historical reasons in the previous case we followed the approach that was inherited from \cite{serf}.
\end{proof}
\begin{proof}[Proof of Proposition \ref{prop:landimlocconv}]
Basically, we will follow the lines of \cite[pp. 1791--1792]{landim}. Fix a $t\in\Rbb^+$ and let $x\in\Rbb$ be a continuity point of $u^{0}(\,\cdot\,,t)$. Furthermore, let $\vphi:\Omega\to\Rbb$ be a monotone non-decreasing cylinder function for which \eqref{eq:landimexpvlimit} holds.
Note that the measure $\tau_{j}\sigb^{\vrho,\,\la}$ is stochastically dominated by $\tau_{k}\sigb^{\vrho,\,\la}$ for every $j\geq k$. Then by attractiveness we have
\[
 \Exp_{\tau_{[N\veps]}\sigb^{\vrho,\,\la}}\vphi(\tau_k\omb(N t))\leq
 \Exp_{\sigb^{\vrho,\,\la}}\vphi\big(\tau_{[Nx]}\omb(N t)\big)\leq \Exp_{\tau_{[N\veps]}\sigb^{\vrho,\,\la}}\vphi(\tau_j\omb(N t))
\]
for every $j \geq [Nx] - [N\veps] \geq k$, where $\veps\in\Rbb$. Now, let $\veps_+>0$ and $\veps_{-}<0$. Then
\begin{multline*}
\frac{1}{\abs{[N\veps_{-}]}+1}\sum_{k\,:\,[Nx]-\abs{[N\veps_{-}]}\leq k\leq [Nx]}\Exp_{\tau_{[N\veps_{-}]}\sigb^{\vrho,\,\la}}\vphi(\tau_j\omb(N t))\\
\leq \Exp_{\sigb^{\vrho,\,\la}}\vphi\big(\tau_{[N x]}\omb(N t)\big)
\leq \frac{1}{[N\veps_{+}]+1} \sum_{j\,:\,[Nx]\leq j \leq [Nx] + [N\veps_{+}]} \Exp_{\tau_{[N\veps_{+}]}\sigb^{\vrho,\,\la}}\vphi(\tau_j\omb(N t)).
\end{multline*}
Taking the limit inferior then the superior as $N\to+\infty$ and using assumption \eqref{eq:landimexpvlimit} we obtain that
\begin{multline*}
\frac{1}{\abs{\veps_{-}}}\int_{z\,:\,x-\abs{\veps_{-}}\leq z\leq x}\Exp_{\pib^{u^{\veps_{-}}(z,t)}}\vphi(\omb)\di z\\
\leq
\liminf_{N\to+\infty}\Exp_{\sigb^{\vrho,\,\la}}\vphi\big(\tau_{[N x]}\omb(N t)\big)
\leq
\limsup_{N\to+\infty}\Exp_{\sigb^{\vrho,\,\la}}\vphi\big(\tau_{[N x]}\omb(N t)\big)\\
\leq
\frac{1}{\veps_{+}}\int_{y\,:\,x\leq y\leq x+\veps_{+}}\Exp_{\pib^{u^{\veps_{+}}(y,t)}}\vphi(\omb)\di y.
\end{multline*}
Since $(\pib^{\vrho})_{\vrho\in\Rbb}$ is an ordered family of measures and $u^{\veps_{-}}$ as well as $u^{\veps_{+}}$ are monotone non-increasing functions it follows that
\begin{multline*}
\Exp_{\pib^{u^{\veps_{-}}(x,t)}}\vphi(\omb)
\leq \liminf_{N\to+\infty}\Exp_{\sigb^{\vrho,\,\la}}\vphi\big(\tau_{[N x]}\omb(N t)\big)
\leq \limsup_{N\to+\infty}\Exp_{\sigb^{\vrho,\,\la}}\vphi\big(\tau_{[N x]}\omb(N t)\big)\\
\leq \Exp_{\pib^{u^{\veps_{+}}(x,t)}}\vphi(\omb).
\end{multline*}
Without loss of generality we can assume that $\vphi\geq0$. Now, let $M\in\Rbb^+$ and define $\vphi_M=\vphi\wedge M$. Then
\begin{multline*}
\abs{\Exp_{\pib^{u^{\veps_{\pm}}(x,t)}}\vphi(\omb)-\Exp_{\pib^{u^{0}(x,t)}}\vphi(\omb)}\\
\leq \abs{\Exp_{\pib^{u^{\veps_{\pm}}(x,t)}}\vphi_M(\omb)-\Exp_{\pib^{u^{0}(x,t)}}\vphi_M(\omb)}
+ 2\Exp_{\pib^{\vrho_{\max}}}\big(\vphi-\vphi_M\big)(\omb)
\end{multline*}
by stochastic dominance of the measures $(\pib^{\vrho})_{\vrho\in\Rc}$. Note that the last quantity in the previous display can be made arbitrarily small by choosing $M$ to be large enough while the first one vanishes as $\veps\to0$ by weak convergence. This results in the desired limit \eqref{eq:stronghdl}.

Since any bounded cylinder function can be written as the difference of two monotone cylinder functions we have finished the proof.
\end{proof}
Now, we can turn to the proofs of the main results (see Section \ref{sec:mainr}).
\begin{proof}[Proof of Theorem \ref{thm:scphydroasym}]
First, assume that our attractive process has bounded occupation numbers. Hence we can apply Theorem \ref{thm:bagurasa}. Let us mention that the density parametrized ergodic (or extremal) set of stationary distributions have been proved to exist and form a continuous and stochastically ordered family of measures with a closed parameter set $\Rc$ of $[\omin, \omax]$ containing $\omin$ and $\omax$ (see \cite[Section 3, Proposition 3.1]{bagurasa}). So for applying Proposition \ref{prop:landimlocconv} we only need to take care of the limit \eqref{eq:landimexpvlimit} when $\vphi(\omb)=\om_0$. Without loss of generality one can assume that $\omin\geq 0$ and that the compact support of $\psi$ has (at most) unit Lebesgue measure. Then we have
\begin{multline*}
\Exp_{\tau_{[N\veps]}\sigb^{\vrho,\,\la}}\bigg|\frac{1}{N}\sum_{j\in\Zbb}\psi(j/N)\cdot \om^{\veps,N}_j(N t)-\int_{x\in\Rbb}\psi(x)\cdot u^{\veps}(x,t) \di x\bigg|\\
\leq \veps + \omax\cdot\max_{x\in\Rbb}\abs{\psi(x)}
\cdot\Prob^N\bigg(\bigg|\frac{1}{N}\sum_{j\in\Zbb}\psi(j/N)\cdot \om^{\veps,N}_j(N t)-\int_{x\in\Rbb}\psi(x)\cdot u^{\veps}(x,t) \di x\bigg|>\veps\bigg),
\end{multline*}
where $\veps>0$ can be chosen arbitrarily. Now, taking the limit as $N\to+\infty$ we get \eqref{eq:landimexpvlimit}. Finally, we can conclude using \eqref{eq:findispldistr}.

On the other hand, considering one of the processes of the misanthrope family with bounded rates one can apply Theorem \ref{thm:rezakhydro}. So we have in hand both the set of product-form extremal stationary distributions and the convergence result \eqref{eq:rezakexpvlimit} (see also \cite[Section 7]{hl}). Hence, we can directly apply Proposition \ref{prop:landimlocconv} with $\vphi(\omb) = \om_0$ and the desired limit \eqref{eq:scpspeedlim} can be obtained via \eqref{eq:findispldistr} again.

In both cases the monotonicity, boundedness and convergence of the entropy solutions $(u^{\veps})_{\veps\in\Rbb}$ come from the classical results of hyperbolic conservation laws (see \cite{holdenrisebro} and further references therein).
\end{proof}
\begin{proof}[Proof of Theorem \ref{thm:scpdiffusivelim}]
By shifting the whole system upwards we can assume, without loss of generality, that $\omin=0$. Now, we are going to handle both the finite and infinite settings. Attractiveness and the fact $\om_n(t)\geq0$ imply that $\sup_{n\in\Zbb}\Exp_{\sigb^{\vrho,\,\la}}\om_n(t)^2$ can be estimated from above by $\Exp_{\sigb^{\vrho,\,\vrho}}\omb_0(t)^2$ which is supposed to be finite for every fixed $t\geq0$. It follows that for each $t\geq0$, the sequence
\[
\bigg(\frac{1}{N}\sum_{j\in\Zbb}\psi(j/N)\cdot \om^{\veps,N}_j(N^2 t)\bigg)_{N\in\Zbb^+}
\]
is bounded in $\Lc^2$ for every $\veps\in\Rbb$, where $\omb^{\veps,N}$ starts from $\tau_{[N\veps]}\sigb^{\vrho,\,\la}$ and $\psi$ is a given continuous function of compact support. Hence the conservation of local equilibrium of gradient processes implies \eqref{eq:landimexpvlimit} for $\vphi(\omb)=\om_0$. Since the time scaling played no role in (the proof of) Proposition \ref{prop:landimlocconv}, one can save this result to the diffusive case as well, resulting in the desired convergence \eqref{eq:scpdiffusivelim}.
\end{proof}
\begin{proof}[Proof of Theorem \ref{thm:collprob}]
Without loss of generality we can assume that $\omin=0$ while $\omax$ is some fixed positive integer.
We start the process $\hat\etab$ from initial configuration $\hat\etab(0)=\omax\ind\{i\leq 0\}$. For sake of brevity we let $\bar r:\,= p(\hat\eta_0(0),\hat\eta_1(0))$, which is positive by the non-degeneracy of the rates. In what follows, we will work with the height function $\gb$ of $\hat\etab$ (see its definition in \eqref{eq:hdef}).

Since the initial configuration $\hat\etab_0$ of $\hat\etab$ can change only due to a jump performed by a particle from the origin to lattice point $1$, it follows that in a small time interval $[0,\veps]$ we should only take into account two events: $\hat\etab(\veps)=\hat\etab_0$ occurring with probability $1-\veps\cdot \bar{r}$ (up to first order in $\veps$) and $\hat\etab(\veps)=\hat\etab_0-\delta_0+\delta_1$ which in turn occurs with rate $\veps\cdot \bar{r}$. All the other moves are of order $\mathrm{o}(\veps)$ in probability. Putting the above together we arrive to
\begin{align}
\frac{\di}{\di t}\Exp_{\hat\etab_0} g_{\oh}(t)
&= \lim_{\veps\downarrow0}\frac{\Exp_{\hat\etab_0} g_{\oh}(t+\veps)-\Exp_{\hat\etab_0} g_{\oh}(t)}{\veps}\nonumber\\
&= \bar{r} \cdot \lim_{\veps\downarrow0}\bigg(\Exp_{\hat\etab_0}
\big\{\Exp [g_{\oh}(t+\veps)\,|\,\hat\etab(\veps)=\hat\etab_0-\delta_0+\delta_1]- \Exp[g_{\oh}(t+\veps)\,|\,\hat\etab(\veps)=\hat\etab_0]\big\}\bigg)\nonumber\\
&= \bar{r} \cdot \bigg(1 + \Exp_{\hat\omb_0}h_{\oh}(t)-\Exp_{\hat\etab_0}g_{\oh}(t)\bigg)\label{eq:kolmforw}
\end{align}
by the Markov property, where $\hb$ is the height function of $\hat\omb$ that starts from $\hat\omb_0=\hat\etab_0-\delta_0+\delta_1$. Along the way we have also used the fact that $g_{\oh}(t)$ counts exactly how many (signed) particle jumps occurred above the bond $[0,1]$ until time $t\geq0$. Also recall that $g_{\oh}(0)$ (and $h_{\oh}(0)$) is set to be zero by choice. (\eqref{eq:kolmforw} is also known as the \emp{Kolmogorov forward equation}.)

We now couple the processes $\hat\omb$ and $\hat\etab$ coordinate-wise, employing the basic coupling with deterministic initial configurations $\hat\omb_0$ and $\hat\etab_0$, respectively. Notice that there are two second class particles in the system $(\hat\omb,\hat\etab)$: a negative and a positive starting from positions $0$ and $1$, respectively. It then follows that under this coupling
\begin{equation}
1 + h_{\oh}(t) - g_{\oh}(t) = \sum_{j=1}^{+\infty} \big(\hat\om_j(t)-\hat\eta_j(t)\big) = \sum_{j=1}^{+\infty} s_j(t)\cdot n_j(t)\leq \ind\{\Nc(s)=2\text{ for all }0\leq s\leq t\}\label{eq:twoscpestim}
\end{equation}
holds a.s. On the other hand
\begin{align}
 \frac{\di}{\di t}\Exp_{\hat\etab_0} g_{\oh}(t)= \lim_{\veps\downarrow0}\frac{\Exp_{\hat\etab_0}\big[g_{\oh}(t+\veps) - g_{\oh}(t)\big]}{\veps}&= \lim_{\veps\downarrow0}\frac{\Exp_{\hat\etab_0}\big[\veps\cdot p(\hat\eta_0(t),\hat\eta_1(t)) - \veps\cdot q(\hat\eta_0(t),\hat\eta_1(t))\big]}{\veps}\nonumber\\
&= \Exp_{\hat\etab_0}[p(\hat\eta_0(t),\hat\eta_1(t))-q(\hat\eta_0(t),\hat\eta_1(t))],\label{eq:kolmbackw}
\end{align}
using again the Markov property and that $g_{\oh}$ can change only if a particle attempts to jump either from $0$ to $1$ or from $1$ to $0$. (\eqref{eq:kolmbackw} is sometimes called the \emp{Kolmogorov backward equation}.)

Putting \eqref{eq:kolmforw} and \eqref{eq:kolmbackw} together and using the estimate \eqref{eq:twoscpestim} we arrive to
\[
\frac{1}{\bar{r}}\Exp_{\hat\etab_0}[p(\hat\eta_0(t),\hat\eta_1(t))-q(\hat\eta_0(t),\hat\eta_1(t))] \leq \hat\Prob\big\{\Nc(s)=2\text{ for all }0\leq s\leq t\big\}.
\]
Now, taking the limit superior as $t\to+\infty$ we obtain the desired inequality \eqref{eq:collprobestimate} by monotone convergence.

In the totally asymmetric case, it is easy to see that there must exist a subsequence $(t_{m})_{m=1}^{+\infty}$ for which $\lim\limits_{m\to+\infty}\Exp_{\hat\etab_0}p(\hat\eta_0(t_m),\hat\eta_1(t_m))>0$,
since otherwise we would have $\lim\limits_{t\to+\infty} p(\hat\eta_0(t),\hat\eta_1(t))=0$ a.s.\ by dominated convergence, implying that the probability of the event $\{\hat\eta_0(t)=0 \text{ or } \hat\eta_1(t)=\omax\}$ tends to $1$ as $t\to+\infty$. But this obviously cannot happen.

Finally, for a misanthrope process (i.e.\ the rates of which satisfy the conditions of Theorem \ref{thm:prodstatergod}) we can apply Theorem \ref{thm:rezakhydro} and then Proposition \ref{prop:landimlocconv}. The proof is then completed by choosing the cylinder function $\vphi(\omb)$ to be $p(\om_0,\om_1)-q(\om_0,\om_1)$ ($\om\in\Omega$) in \eqref{eq:rezakexpvlimit}.
\end{proof}

\addcontentsline{toc}{section}{References}%
\bibliographystyle{plain}%
\bibliography{refsmarton}%

\begin{thebibliography}{10}

\bibitem{amirangel}
G.~Amir, O.~Angel, and B.~Valk{\'o}.
\newblock The {TASEP} speed process.
\newblock {\em Ann. Probab.}, 39(4):1205--1242, 2011.

\bibitem{and}
E.D. Andjel.
\newblock Invariant measures for the zero range processes.
\newblock {\em Ann. Probab.}, 10(3):525--547, 1982.

\bibitem{bagurasa}
C.~Bahadoran, H.~Guiol, K.~Ravishankar, and E.~Saada.
\newblock Euler hydrodynamics of one-dimensional attractive particle systems.
\newblock {\em Ann. Probab.}, 34(4):1339--1369, 2006.

\bibitem{bagurasastrong}
C.~Bahadoran, H.~Guiol, K.~Ravishankar, and E.~Saada.
\newblock Strong hydrodynamic limit for attractive particle systems on
  {$\mathbb Z$}.
\newblock {\em Electron. J. Probab.}, 15:no. 1, 1--43, 2010.

\bibitem{valak}
M.~Bal{\'a}zs.
\newblock Microscopic shape of shocks in a domain growth model.
\newblock {\em J. Statist. Phys.}, 105(3-4):511--524, 2001.

\bibitem{fluct}
M.~Bal{\'a}zs.
\newblock Growth fluctuations in a class of deposition models.
\newblock {\em Ann. Inst. H. Poincar\'e Probab. Statist.}, 39(4):639--685,
  2003.

\bibitem{sokvalak}
M.~Bal{\'a}zs.
\newblock Multiple shocks in bricklayers' model.
\newblock {\em J. Statist. Phys.}, 117(1-2):77--98, 2004.

\bibitem{rwshscp}
M.~Bal{\'a}zs, Gy. Farkas, P.~Kov{\'a}cs, and A.~R{\'a}kos.
\newblock Random walk of second class particles in product shock measures.
\newblock {\em J. Stat. Phys.}, 139(2):252--279, 2010.

\bibitem{b_n_t_t_coex16}
M.~Bal{\'a}zs, A.L. Nagy, B.~T{\'o}th, and I.~T{\'o}th.
\newblock Coexistence of shocks and rarefaction fans: Complex phase diagram of
  a simple hyperbolic particle system.
\newblock {\em Journal of Statistical Physics}, 165(1):115--125, 2016.

\bibitem{exists}
M.~Bal{\'a}zs, F.~Rassoul-Agha, T.~Sepp{\"a}l{\"a}inen, and S.~Sethuraman.
\newblock Existence of the zero range process and a deposition model with
  superlinear growth rates.
\newblock {\em Ann. Probab.}, 35(4):1201--1249, 2007.

\bibitem{convex}
M.~Bal{\'a}zs and T.~Sepp{\"a}l{\"a}inen.
\newblock A convexity property of expectations under exponential weights.
\newblock {\em \url{http://arxiv.org/abs/0707.4273}}, 2007.

\bibitem{varj2nd}
M.~Bal{\'a}zs and T.~Sepp{\"a}l{\"a}inen.
\newblock Exact connections between current fluctuations and the second class
  particle in a class of deposition models.
\newblock {\em J. Stat. Phys.}, 127(2):431--455, 2007.

\bibitem{catdob}
E.~Cator and S.~Dobrynin.
\newblock Behavior of a second class particle in hammersley's process.
\newblock {\em Electron. J. Probab.}, 11(26):670--685, 2006.

\bibitem{coco}
C.~Cocozza-Thivent.
\newblock Processus des misanthropes.
\newblock {\em Z. Wahrsch. Verw. Gebiete}, 70(4):509--523, 1985.

\bibitem{dls}
B.~Derrida, J.L. Lebowitz, and E.R. Speer.
\newblock Shock profiles for the asymmetric simple exclusion process in one
  dimension.
\newblock {\em J. Statist. Phys.}, 89(1-2):135--167, 1997.
\newblock Dedicated to Bernard Jancovici.

\bibitem{ferrarishocks}
P.A. Ferrari.
\newblock Shocks in one-dimensional processes with drift.
\newblock In {\em Probability and phase transition ({C}ambridge, 1993)}, volume
  420 of {\em NATO Adv. Sci. Inst. Ser. C Math. Phys. Sci.}, pages 35--48.
  Kluwer Acad. Publ., Dordrecht, 1994.

\bibitem{fgmcollision}
P.A. Ferrari, P.~Gon{\c{c}}alves, and J.B. Martin.
\newblock Collision probabilities in the rarefaction fan of asymmetric
  exclusion processes.
\newblock {\em Ann. Inst. Henri Poincar\'e Probab. Stat.}, 45(4):1048--1064,
  2009.

\bibitem{serf}
P.A. Ferrari and C.~Kipnis.
\newblock Second class particles in the rarefaction fan.
\newblock {\em Ann. Inst. H. Poincar\'e Probab. Statist.}, 31(1):143--154,
  1995.

\bibitem{ferrmarpiment}
P.A. Ferrari, J.B. Martin, and L.P.R. Pimentel.
\newblock A phase transition for competition interfaces.
\newblock {\em Ann. Appl. Probab.}, 19(1):281--317, 2009.

\bibitem{compint}
P.A. Ferrari and L.P.R. Pimentel.
\newblock Competition interfaces and second class particles.
\newblock {\em Ann. Probab.}, 33(4):1235--1254, 2005.

\bibitem{ferrpresuttivaressymzr}
P.A. Ferrari, E.~Presutti, and M.E. Vares.
\newblock Local equilibrium for a one-dimensional zero range process.
\newblock {\em Stochastic Process. Appl.}, 26(1):31--45, 1987.

\bibitem{gon_zr_raref}
P.~Gon{\c{c}}alves.
\newblock On the asymmetric zero-range in the rarefaction fan.
\newblock {\em J. Stat. Phys.}, 154(4):1074--1095, 2014.

\bibitem{guiolmountfordquestions}
H.~Guiol and T.~Mountford.
\newblock Questions for second class particles in exclusion processes.
\newblock {\em Markov Process. Related Fields}, 12(2):301--308, 2006.

\bibitem{holdenrisebro}
H.~Holden and N.H. Risebro.
\newblock {\em Front tracking for hyperbolic conservation laws}, volume 152 of
  {\em Applied Mathematical Sciences}.
\newblock Springer, New York, 2011.
\newblock First softcover corrected printing of the 2002 original.

\bibitem{cl}
C.~Kipnis and C.~Landim.
\newblock {\em Scaling limits of interacting particle systems}, volume 320 of
  {\em Grundlehren der Mathematischen Wissenschaften [Fundamental Principles of
  Mathematical Sciences]}.
\newblock Springer-Verlag, Berlin, 1999.

\bibitem{landim}
C.~Landim.
\newblock Conservation of local equilibrium for attractive particle systems on
  {${\bf Z}^d$}.
\newblock {\em Ann. Probab.}, 21(4):1782--1808, 1993.

\bibitem{landiminfinite}
C.~Landim and M.~Mourragui.
\newblock Hydrodynamic limit of mean zero asymmetric zero range processes in
  infinite volume.
\newblock {\em Ann. Inst. H. Poincar\'e Probab. Statist.}, 33(1):65--82, 1997.

\bibitem{ips}
T.M. Liggett.
\newblock {\em Interacting particle systems}, volume 276 of {\em Grundlehren
  der Mathematischen Wissenschaften [Fundamental Principles of Mathematical
  Sciences]}.
\newblock Springer-Verlag, New York, 1985.

\bibitem{mountguiol}
T.~Mountford and H.~Guiol.
\newblock The motion of a second class particle for the {TASEP} starting from a
  decreasing shock profile.
\newblock {\em Ann. Appl. Probab.}, 15(2):1227--1259, 2005.

\bibitem{perrut}
A.~Perrut.
\newblock Hydrodynamic limit for a nongradient system in infinite volume.
\newblock {\em Stochastic Process. Appl.}, 84(2):227--253, 1999.

\bibitem{hl}
F.~Rezakhanlou.
\newblock Hydrodynamic limit for attractive particle systems on {${\bf Z}\sp
  d$}.
\newblock {\em Comm. Math. Phys.}, 140(3):417--448, 1991.

\bibitem{rez}
F.~Rezakhanlou.
\newblock Microscopic structure of shocks in one conservation laws.
\newblock {\em Ann. Inst. H. Poincar\'e Anal. Non Lin\'eaire}, 12(2):119--153,
  1995.

\bibitem{romik}
D.~Romik and P.~{\'S}niady.
\newblock Jeu de taquin dynamics on infinite {Y}oung tableaux and second class
  particles.
\newblock {\em Ann. Probab.}, 43(2):682--737, 2015.

\bibitem{hkl}
T.~Sepp{\"a}l{\"a}inen.
\newblock Existence of hydrodynamics for the totally asymmetric simple
  {$K$}-exclusion process.
\newblock {\em Ann. Probab.}, 27(1):361--415, 1999.

\bibitem{transinvexcl}
T.~Sepp{\"a}l{\"a}inen.
\newblock Translation {I}nvariant {E}xclusion {P}rocesses.
\newblock Unpublished book, available at
  \url{http://www.math.wisc.edu/~seppalai/excl-book/ajo.pdf}, 2008.

\bibitem{hydro}
B.~T{\'o}th and B.~Valk{\'o}.
\newblock Between equilibrium fluctuations and {E}ulerian scaling: perturbation
  of equilibrium for a class of deposition models.
\newblock {\em J. Statist. Phys.}, 109(1-2):177--205, 2002.

\bibitem{tracywidom}
C.~A. Tracy and H.~Widom.
\newblock On the distribution of a second-class particle in the asymmetric
  simple exclusion process.
\newblock {\em J. Phys. A}, 42(42):425002, 6, 2009.

\end{thebibliography}

\end{document}